\newtheorem{thm}{Theorem}[section]
\newtheorem{lem}[thm]{Lemma}
\newtheorem{prop}[thm]{Proposition}
\newtheorem*{claim}{Claim}
\theoremstyle{remark}
\newtheorem*{remark}{Remark}
\newcommand{\R}{\mathbb{R}}
\newcommand{\Z}{\mathbb{Z}} 
\newcommand{\T}{\mathbb{T}} 
\newcommand{\N}{\mathbb{N}}
\begin{document}

\title[Lyapunov instability in two degrees of freedom]{Lyapunov instability in KAM stable Hamiltonians with two degrees of freedom}
\author[Frank Trujillo]{}
\subjclass{37J25, 37J40, 70H08, 70H14}
\keywords{Hamiltonian dynamics, elliptic fixed points, nearly integrable, KAM theory, invariant tori, stability}
\email{frank.trujillo@imj-prg.fr}
\maketitle

\centerline{\scshape Frank Trujillo}
\medskip
{\footnotesize
 \centerline{IMJ-PRG, Universit\'e de Paris}
 \centerline{Paris, France}
}

\bigskip


\begin{abstract}
For a fixed frequency vector $\omega \in \mathbb{R}^2 \, \setminus \, \lbrace 0 \rbrace$ obeying $\omega_1 \omega_2 < 0$ we show the existence of Gevrey-smooth Hamiltonians, arbitrarily close to an integrable Kolmogorov non-degenerate analytic Hamiltonian, having a Lyapunov unstable elliptic equilibrium with frequency $\omega$. In particular, the elliptic fixed points thus constructed will be KAM stable, i.e. accumulated by invariant tori whose Lebesgue density tend to one in the neighbourhood of the point and whose frequencies cover a set of positive measure. 

Similar examples for near-integrable Hamiltonians in action-angle coordinates in the neighbourhood of a Lagragian invariant torus with arbitrary rotation vector are also given in this work.
\end{abstract}

\maketitle

\section{Introduction}
\label{sec: introduction}

By classical KAM theory, non-resonant elliptic fixed points satisfying the Kolmogorov's non-degeneracy condition are accumulated by invariant tori whose Le-besgue density tend to one in the neighbourhood of the point and whose frequencies cover a set of positive measure. This property is sometimes called \textit{KAM stability.} V. Arnold \cite{arnold_stability_1961} proved\footnote{According to Bruno \cite{bruno_question_1989} Arnold's original proof was not correct. A different proof of this result can be found in \cite{siegel_lectures_2012}.} 
that non-resonant elliptic fixed points of Hamiltonians with two degrees of freedom which are simultaneously Kolmogorov and isoenergetic non-degenerate must be Lyapunov stable. 

Restricted to elliptic fixed points in two degree-of-freedom Hamiltonian systems, we consider the question of wether or not KAM stability by itself implies stability in the sense of Lyapunov. We point out that for systems with three or more degrees of freedom this question has a negative answer. In fact, examples related to Arnold's diffusion \cite{arnold_instability_1964} show that for systems with at least five degrees of freedom KAM-stability does not imply Lyapunov stability of the elliptic equilibrium, even if we assume isoenergetic non-degeneracy. More recently, B. Fayad \cite{fayad_lyapunov_2019} constructed similar examples for systems with three or more degrees of freedom.

In this work we show that for systems with two degrees of freedom the answer to this question is also negative. Moreover, our examples can be taken arbitrarily close to non-degenerate \textit{integrable} Hamiltonians, i.e. systems for which the phase space is completely foliated by invariant tori. The construction is inspired in the diffusion along resonances mechanism for Hamiltonians in action-angle coordinates, that we describe in detail in Section \ref{example}. Let us point out that similar results to those concerning the stability and accumulation by invariant tori of non-resonant elliptic fixed points also hold for Diophantine invariant tori of Hamiltonians in action-angle coordinates. The aforementioned mechanism will allow us to construct examples of Lyapunov unstable invariant tori with arbitrary rotation vector for non-degenerate near-integrable Gevrey-Smooth Hamiltonians in action-angle coordinates. As we shall see, for frequency vectors $\omega = (\omega_1, \omega_2)$ obeying $\omega_1\omega_2 < 0$, this construction can be carried to the elliptic fixed point case. 

\section{Setting and notations}
\label{UE: sc_setting}
\subsection{Hamiltonians flows near a fixed point}

Throughout this work we denote vectors in $\R^4$ by $(x_1,y_1,x_2,y_2)$ and endow $\R^4$ with its canonical symplectic structure $dx_1\wedge dy_1+dx_2 \wedge dy_2$. Recall that a Hamiltonian on $\R^4$ is simply a smooth function $H$ on $\R^4$ and that its associated \textit{Hamiltonian system} is given by
\begin{equation}
\label{eq: hamiltonian_system}
\dot{x_i} = \partial_{y_i} H, \hskip0.5cm \dot{y_i} = -\partial_{x_i} H.
\end{equation}
The solution to the previous system, which we denote by $\Phi_H^t$, is called the \textit{Hamiltonian flow} of $H$. We denote by $X_H$ the \textit{Hamiltonian vector field} given by (\ref{eq: hamiltonian_system}). A fixed point $p_0$ of a Hamiltonian flow $H$ is said to be \textit{Lyapunov unstable} if there exists $A > 0$ such that for any $\delta > 0$ there exist $p$ and $t \in \R$ obeying 
\[|p - p_0| < \delta, \hskip1cm \left| \Phi^t_{H}(p) - p_0\right| > A. \]
A fixed point of a Hamiltonian flow is called $\textit{elliptic}$ if the eigenvalues of the linearized system are purely imaginary. In this case the spectrum of the associated matrix will be given by $\lbrace \pm i\omega_1, \pm i \omega_2 \rbrace$ for some $\omega = (\omega_1, \omega_2)$ in $\R^2$. We call $\omega$ the \textit{frequency vector} of the elliptic equilibrium. A vector $\omega \in \R^2$ is said to be \textit{non-resonant} if the equation $\langle \omega, k \rangle = 0$ has no solution $k \in \Z^2 \,\setminus\, \{0\}.$ An elliptic fixed point is said to be non-resonant if its frequency vector is non-resonant. 

Every Hamiltonian $H:\R^4 \rightarrow \R$ having a non-resonant elliptic fixed point at the origin with frequency vector $\omega$ can be expressed in the form 
\begin{equation}
\label{ellipticeq}
H(x,y) = \sum_{i=1}^2 \dfrac{x_i^2+y_i^2}{2} \omega_i + \mathcal{O}^3(x,y)
\end{equation}
by means of a linear symplectic transformation (see \cite{arnold_mathematical_2007} for a proof). Moreover, if we define $I = (I_1,I_2): \R^4 \rightarrow \R^2$ by 
 \[ I_i (x, y) = \dfrac{x_i^2+y_i^2}{2},\]
for any $n \in \N$ the Birkhoff normal form ensures the existence of a symplectic transformation taking the Hamiltonian (\ref{ellipticeq}) to the form
 \[ H(x, y) = f_n(I(x, y)) + \mathcal{O}^{2n + 1}(x, y),\]
where $f_n$ is a polynomial of degree at most $n.$ A smooth Hamiltonian $H$ of the form 
\begin{equation}
\label{BNF}
H(x, y) = h(I(x, y)) + \mathcal{O}^5(x, y)
\end{equation}
is said to be \textit{Kolmogorov non-degenerate} if 
\begin{equation}
\label{eq: kolmogorov_non_degenerate}
\det \left| \partial_I^2 h(0) \right| \neq 0
\end{equation}
and is said to be \textit{isoenergetic non-degenerate} if 
\begin{equation}
\label{eq: isoenergetic_non_degenerate}
\det \left| \begin{array}{ll} \partial_I^2 h(0)& \partial_I h (0)^T \\ 
\partial_I h(0) & \hspace{0.4cm} 0 \end{array} \right| \neq 0 .
\end{equation}
Notice that for $H$ of the form (\ref{BNF}) the origin is an elliptic equilibrium with rotation vector $\partial_I h(0).$ In the following, given a smooth function $h: U \subset \R^2 \rightarrow \R$ defined on an open neighbourhood of $0$ we will say that $h$ is \textit{Kolmogorov} (resp. \textit{iso-energetic}) \textit{non-degenerate} if (\ref{eq: kolmogorov_non_degenerate}) (resp. (\ref{eq: isoenergetic_non_degenerate})) is verified. 
 
A smooth Hamiltonian $H$ of the form (\ref{BNF}) is said to be \textit{integrable} if it can be conjugated by means of a symplectic transformation to a Hamiltonian of the form $h(I(x, y))$. Notice that in this case the phase space will be completely foliated by invariant tori. 

\subsection{Hamiltonians in action-angle coordinates} In the following we denote vectors in $\T^2 \times \R^2$ as $(\theta, R)= (\theta_1, \theta_2, R_1, R_2)$ and we endow $\T^2 \times \R^2$ with the canonical symplectic structure $d\theta_1 \wedge dR_1 + d\theta_2 \wedge dR_2.$ We will refer to $I$ and $\theta$ as the \textit{action} and \textit{angle} coordinates respectively. A Hamiltonian defined on an open set of the form $\T^2 \times U \subset \T^2 \times \R^2$ is said to be in action-angle coordinates. Recall that by the Arnold-Liouville theorem under mild non-degeneracy hypotheses near-integrable Hamiltonians can put into action-angle coordinates in the neighbourhood of an invariant torus of the unperturbed system.

For a Hamiltonian $H$ on $\T^2 \times \R^2$ its associated Hamiltonian system is given by
\[ \dot{\theta_i} = \partial_{R_i} H, \hskip0.5cm \dot{R_i} = -\partial_{\theta_i} H.\]
In this work we will be mostly interested in Hamiltonians on $\T^2 \times \R^2$ for which the torus $\T_0 = \T^2 \times \lbrace 0 \rbrace$ is invariant and whose restricted dynamics are given by a continuous translation, that is, smooth Hamiltonians of the form
\begin{equation}
\label{quasiper}
H (\theta,R) = h(R) + \mathcal{O}^2(R).
\end{equation}
In this case, denoting $\omega = \partial_R h (0)$, the associated Hamiltonian flow $\Phi^t_H$ restricted to $\T_0$ is given by 
\[\Phi^t_H(\theta, 0) = (\theta + t \omega, 0)\]
and we say that the invariant torus $\T_0$ has \textit{translation vector} $\omega.$ The invariant torus $\T_0$ is said to be \textit{Lyapunov unstable} if there exists $A > 0$ such for every $\delta > 0$ there exists $p$ and $t \in \R$ obeying
\[d(p,\T_0) < \delta, \hskip1cm d\left(\Phi^t_{H}(p), \T_0\right) > A. \]
A Hamiltonian $H$ of the form (\ref{quasiper}) is said to be \textit{Kolmogorov} (resp. \textit{iso-energetic}) \textit{non-degenerate} if $h$ is Kolmogorov (resp. iso-energetic) non-degenerate. 

As we shall see, for $h: U \subset \R^2 \rightarrow \R$ defined on an open neighbourhood of the origin, some of the elliptic equilibrium's stability properties for Hamiltonians of type (\ref{BNF}) can be related to stability properties of the torus $\T^2 \times \{0\}$ for Hamiltonians in action-angle coordinates of the form (\ref{quasiper}). 

\subsection{Function spaces} Since the properties we are interested in are purely local, let us introduce suitable spaces of functions that will simplify the exposition. For $s > 1$ let $\mathcal{G}^{s}(\mathbb{T}_0)$ denote the set of Gevrey-smooth functions of class $s$ defined on some open neighbourhood of $\T_0 \subset \T^2 \times \R^2$. Recall that a real $C^\infty$ function $f$ defined on an open set $U$ is said to be \textit{Gevrey of class $s$} if for every compact set $K \subset U$ there exist positive constants $c, \rho$ such that 
\[ \| f\|_K = \sup_{z \in K} |\partial ^ \alpha f(z)| \leq c\rho^{|\alpha|}(\alpha!)^s\]
for every multi-index $\alpha.$ Similarly, for $r = \infty, \omega$ and for any $s > 1$ we denote by $\mathscr{C}^r(0)$ and $\mathcal{G}^{s}(0)$ the set of $C^r$ and of Gevrey-smooth functions of class $s$ defined on some open neighbourhood of $0 \in \R^2$. 

\section{Instability in action-angle coordinates}
\label{sec: torus_instability}

The goal of this section is to prove the following.

\begin{thm}
\label{thm: actionangle}
Given $\sigma > 0$ and $\omega \in \R^2 \,\setminus\,\lbrace 0 \rbrace$ there exist a Kolmogorov non-degenerate analytic function $h \in \mathscr{C}^\omega(0)$ and a Gevrey-smooth function $f \in \mathcal{G}^{1+\sigma}(\mathbb{T}_0)$ flat at $\T_0$ such that for any $\epsilon > 0$ $\T_0$ is a Lyapunov unstable invariant torus with rotation vector $\omega$ for the Hamiltonian 
 \[H_\epsilon(\theta, R) = h(R) + \epsilon f(\theta, R).\]
\end{thm}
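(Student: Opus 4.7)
The plan is to implement the \emph{diffusion along resonances} mechanism alluded to in the introduction. Take
\[h(R) = \langle \omega, R\rangle + \tfrac{1}{2}\langle M R, R\rangle\]
for some invertible symmetric $2\times 2$ matrix $M$; this $h$ is analytic, Kolmogorov non-degenerate, and $\partial_R h(0) = \omega$. Since $\omega \neq 0$, a Dirichlet-type argument yields a sequence $k_n \in \Z^2\setminus\{0\}$ with $|k_n| \to \infty$ and points $R_n$ on the resonance surfaces $\Sigma_{k_n} = \{R : \langle k_n, \partial_R h(R)\rangle = 0\}$ satisfying $|R_n| \to 0$ monotonically. These $R_n$ will serve as the anchors of the construction.

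The perturbation is built as a superposition of localized resonant terms,
\[f(\theta, R) = \sum_{n \ge 1} a_n\, \chi_n(R)\cos\langle k_n, \theta\rangle,\]
where each $\chi_n \in \mathcal{G}^{1+\sigma}(\R^2)$ is a bump supported in a neighborhood of $R_n$ of radius $r_n$, and the amplitudes $a_n > 0$ and radii $r_n$ remain to be tuned. A direct estimate using the Gevrey scaling of $\chi_n$ and of $\cos\langle k_n, \cdot\rangle$ shows that $f$ belongs to $\mathcal{G}^{1+\sigma}(\T_0)$ as soon as $a_n \le C\exp(-c L_n^{1/\sigma})$, with $L_n = \max(|k_n|, r_n^{-1})$, for some $c, C > 0$. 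Since the support of $f$ recedes to $\T_0$, the resulting $f$ is automatically flat at $\T_0$.

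Near each $R_n$ only the $n$-th summand of $f$ is active, and one step of partial averaging (elimination of all but the $k_n$-harmonic) conjugates $H_\epsilon$ to a normal form
\[\tilde H_n(\theta, R) = h(R) + \epsilon a_n \chi_n(R)\cos\langle k_n, \theta\rangle + \mathcal{R}_n(\theta, R),\]
with a controllable Gevrey remainder $\mathcal{R}_n$. In coordinates adapted to the resonance ($\phi = \langle k_n, \theta\rangle$ paired with a rescaled transverse action $I$), the leading $(\phi, I)$-dynamics is a pendulum $\tfrac{1}{2}\mu_n I^2 + \epsilon a_n \chi_n \cos\phi$, and its separatrix lifts, in the full phase space, to a normally hyperbolic invariant two-torus whose two-dimensional stable and unstable manifolds sweep out a transverse excursion of the action of diameter $d_n(\epsilon) \sim \sqrt{\epsilon a_n}/|k_n|$ around $R_n$.

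The final step is to tune the parameters so that these resonance zones chain together. The key observation is that the Gevrey regularity forces $a_n$ to decay only \emph{subexponentially} in $L_n$, while the spacings $|R_n - R_{n+1}|$ and the radii $r_n$ can be condensed arbitrarily fast; hence one can arrange that for every $\epsilon > 0$ there is an index $N(\epsilon)$ beyond which the separatrix of the $n$-th pendulum reaches into the $(n+1)$-th resonance zone. A shadowing argument along this chain then produces, for each $\epsilon > 0$, an orbit starting arbitrarily close to $\T_0$ that hops from one resonance zone to the next and ultimately reaches a point at distance $\ge |R_{N(\epsilon)}|/2 > 0$ from $\T_0$, giving the desired Lyapunov instability. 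The most delicate step is precisely this shadowing: one must verify that the normally hyperbolic tori of consecutive resonances admit transverse heteroclinic intersections surviving the Gevrey remainders $\mathcal{R}_n$, and simultaneously balance the parameters $(k_n, R_n, r_n, a_n)$ so that the construction works uniformly for every $\epsilon > 0$. This is likely carried out through a quantitative $\lambda$-lemma or a windowing/correctly-aligned-boxes argument tailored to the Gevrey setting.
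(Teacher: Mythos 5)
Your proposal and the paper both build the perturbation as a lacunary sum of localized resonant modes $\sum_n \epsilon_n a_n(R)\cos\langle k_n^\perp,\theta\rangle$ with Gevrey-scaled bump functions, and both control the Gevrey class through subexponential decay of the coefficients. The similarity ends there: the two proofs differ at the crucial step, and I believe yours has a gap that is not just a missing detail but a structural obstruction.

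The paper does \emph{not} chain resonances or use shadowing, normally hyperbolic tori, or separatrix splitting. Instead it designs the integrable part $h$ so that $h$ is \emph{constant} along each resonant line $\Lambda_{y_n}=(0,y_n)+\langle k_n^\perp\rangle$ (Lemma~\ref{lem: intham}). Then, with $\langle k_n,R\rangle$ conserved by the perturbed flow and $\langle\nabla h,k_n^\perp\rangle=0$ along $\Lambda_{y_n}$, the explicit orbit with initial angle $\theta^{(n)}$ satisfying $\sin\langle k_n^\perp,\theta^{(n)}\rangle=1$ stays in $\T^2\times\Lambda_{y_n}$ forever and drifts at \emph{linear} speed $z_R^{(n)}(t)=(0,y_n)+t\epsilon\epsilon_n k_n^\perp$. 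A single resonance channel carries the orbit out of every bounded neighbourhood; no hopping is needed. The price is that such an $h$ is necessarily isoenergetically degenerate (its level sets contain lines), which is exactly how the construction escapes Arnold's stability theorem.

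Your choice $h(R)=\langle\omega,R\rangle+\tfrac12\langle MR,R\rangle$ with generic invertible $M$ does not have this property. On the line $\Lambda=R_n+\langle k_n^\perp\rangle$ the restriction $h|_\Lambda(s)$ is a nondegenerate quadratic in $s$ (its leading coefficient $\tfrac12\langle Mk_n^\perp,k_n^\perp\rangle$ cannot vanish for infinitely many rational directions $k_n^\perp$ with a single fixed $M$). Together with the exact conservation of $\langle k_n,R\rangle$ and of $H_\epsilon$, this pins the $R$-component of a single-resonance orbit to an $O(\sqrt{\epsilon a_n}/|k_n|)$-neighbourhood of $R_n$: there is no unbounded drift along one resonance, which is why your argument is forced into the chain-of-resonances route. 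But chaining is precisely where the two-degrees-of-freedom case bites back: on each three-dimensional energy surface the surviving KAM two-tori \emph{separate}, so an orbit cannot cross the gaps between your well-separated resonance zones unless all tori in between are destroyed, which your flat, lacunary $f$ does not accomplish; and a generic quadratic $h$ is isoenergetically nondegenerate, so Arnold's stability theorem already forbids the conclusion unless you impose the extra codimension-one degeneracy $\langle M^{-1}\omega,\omega\rangle=0$, which you do not do. Finally, the quantitative balance you invoke (``the separatrix of the $n$-th pendulum reaches into the $(n+1)$-th zone'') is incompatible with your own Gevrey constraint: the separatrix half-width is $\sim\sqrt{\epsilon a_n}/|k_n|$ with $a_n\lesssim\exp(-cL_n^{1/\sigma})$, which is vastly smaller than any reasonable choice of $r_n$ or of $|R_n-R_{n+1}|$, so the zones do not touch. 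The ``shadowing'' step, which you flag as the delicate one, is therefore not a technical gap to be filled in later; as set up, it would fail. The fix is the paper's: abandon the polynomial $h$ and build an analytic, Kolmogorov nondegenerate but isoenergetically degenerate $h$ whose level sets contain the resonant lines, so that a single explicit orbit does the work.
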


\begin{remark}
The unstable orbits exhibited in the proof of the theorem will drift away from $\T_0$ at a linear speed. See (\ref{eq: proof_diffusion}).
\end{remark}

This theorem is proven in Section \ref{sketch}. To motivate its proof, let us give first a detailed explanation of the diffusion mechanism we intend to use.

\subsection{Diffusion along resonances}
\label{example}

For the remaining of this work, given a vector $v = (v_1, v_2)\in \R^2$ we denote 
\[|v| = |v_1| + |v_2|, \hskip1cm |v|_{\max} = \max\{|v_1|, |v_2|\} \] 
\[ v^\perp = (-v_2, v_1), \hskip1cm \langle v \rangle = \textup{Vect}(v).\]
Let us start by considering a simple example of a near-integrable system admitting unbounded orbits. Suppose $k \in \Z^2$ and define $h : \R^2 \rightarrow \R$ as $h(R) = \langle k, R\rangle.$ Notice that $k$ is resonant since $\langle k, k^\perp \rangle = 0$. Fix $\epsilon \in \R$ and define $H_\epsilon : \T^2 \times \R^2 \rightarrow \R$ as 
\begin{equation}
\label{simpleinstability}
 H_\epsilon(\theta, R) = h(R) + \epsilon \cos(\theta \cdot k^{\bot}).
 \end{equation}
The Hamiltonian flow associated to $H_\epsilon$ is given by 
\[ \Phi_{H_\epsilon}^t (\theta,R) = (\theta + tk, R+ t\epsilon \sin(\theta \cdot k^{\bot})k^{\bot}).\]
Let $\theta_0 \in \T^2$ such that $\sin(\theta_0 \cdot k^{\bot}) = 1$. Then for any $R_0 \in \R^2$
\[ \Phi_{H_\epsilon}^t(\theta_0, R_0) = (\theta_0 + tk, R_0 + t\epsilon k^{\bot})\]
defines an unbounded solution for the Hamiltonian which drifts at linear speed along the affine subspace $\Lambda = R_0 + \langle k^\bot \rangle$. 

Although in the previous example $\T_0$ is not invariant by $H_\epsilon$, this can be easily fixed if we multiply the perturbative term by an appropriate bump function. Nevertheless, $h_0$ is far from being Kolmogorov non-degenerate. Moreover, although $\T_0$ is invariant for $h_0$, for the previous construction to make sense its rotation vector has to be an integer vector. We claim that the main feature allowing solutions to `escape' is not the particular form of the function $h$ but the fact that the gradient of $h$ restricted to some affine subspace $\Lambda$ of rational slope remains orthogonal to it. This will allow us to adapt the previous construction and to consider similar examples for more general integrable Hamiltonians $h.$
 
In fact, let $h: \R^2 \rightarrow \R$ smooth and $\Lambda \subset \R^2$ be a line (not necessarily passing through the origin) of rational slope such that $h\mid_\Lambda$ is constant. Then $\Lambda = R_0 + \langle k^\perp\rangle,$ for some $R_0 \in \R^2$ and $k \in \Z^2.$ Notice that $h\mid_\Lambda$ being constant is equivalent to $\partial_Rh$ being orthogonal to $\Lambda$. Let $H_\epsilon$ as in (\ref{simpleinstability}), $\theta_0 \in \T^2$ such that $\sin(\theta_0 \cdot k^{\bot}) = 1$ and denote 
\[ z(t) = (z_\theta(t), z_R(t)) = \Phi_{H_\epsilon}^t(\theta_0, R_0).\]
We claim that $z_R(t)$ verifies $z_R(t) = R_0 + t\epsilon k^\perp$ for all $t \in \R.$ Recall that $\Phi_{H_\epsilon}^t$ is the flow associated to the Hamiltonian vector field
\begin{align*}
X_{H_\epsilon}(\theta, R) & = (\partial_R H_{\epsilon} (\theta, R), -\partial_\theta H_{\epsilon}(\theta, R)) \\
& = (\partial_R h(R), \epsilon \sin(\theta \cdot k^\perp) k^\perp).
\end{align*}
Since $\partial_\theta H_\epsilon$ is always collinear with $k^\perp$ it follows that $z_R(t) - R_0$ is collinear with $k^\perp$ for all $t \in \R$. Thus $z(t) \in \T^2 \times \Lambda$ for all $t \in \R.$ Since $\partial_R h \mid_{\Lambda}$ is always collinear with $k$ it follows that $ \langle z_\theta(t) - \theta_0 , k^\perp \rangle = 0$ for all $t \in \R.$ Hence
\[ \dot{z}(t) = X_{H_\epsilon}(z(t)) = (\partial_R h (z_R(t)), \epsilon k^\perp).\]
Therefore $z_R(t) = R_0 + t \epsilon k^\perp$ for all $t \in \R.$

This obstruction to stability (the fact that the gradient of the unperturbed Hamiltonian when restricted to some linear subspace might remain orthogonal to it) its well-known. Moreover, N. Nekhoroshev \cite{nekhoroshev_exponential_1977} showed that this was the main obstruction for the \textit{exponential stability} of invariant tori, where by exponential stability we mean that solutions close to the invariant torus remain close to it for an interval of time that is exponentially large with respect to the inverse of the distance to the torus. Nekhoroshev introduced the concept of \textit{steep functions} which quantifies the idea of the gradient of a function not remaining orthogonal to any given linear subspace and proved that generically invariant torus of analytic near-integrable Hamiltonians having a Diophantine rotation vector are exponentially stable. Recent works by A. Bounemoura et al. \cite{bounemoura_superexponential_2017} improved Nekhoroshev's result and showed that generically Diophantine invariant torus of analytic or Gevrey smooth near-integrable Hamiltonians are actually super-exponentially stable. 

\subsection{Proof of Theorem \ref{thm: actionangle}}
\label{sketch}

Let us first give a brief sketch of the proof. Inspired by the examples considered in the previous section, we will construct an integrable Hamiltonian $h$ defined on an open neighbourhood of $0$ and whose gradient is orthogonal to a family of affine subspaces approaching the origin. We then define an appropriate perturbation $f$ in order to obtain unbounded solutions for the Hamiltonian $H_\epsilon$ with initial conditions arbitrarily close to $\T_0$. The perturbation will be flat on $\T_0$ and hence $\T_0$ will be invariant for the perturbed Hamiltonian. 

Fix $\omega $ in $\R^2 \,\setminus\, \lbrace 0 \rbrace.$ Let $v: \R \rightarrow \R^2$ analytic with $v(0) = \omega$ and denote 
\[\Lambda_t = (0,t) + \langle v^\perp(t) \rangle\]
for every $t \in \R$. Suppose there exists $h :\R^2 \rightarrow \R$ analytic such that
\begin{equation}
\label{gradyaxis}
\nabla h(0,t) = v(t), \hskip1cm h\mid_{\Lambda_t} = h(0,t),
\end{equation} 
and assume that the graph of $v$ is not contained in any line of the plane. Since close to the origin the angle of $v$ with the $x$-axis changes we can find sequences $(y_n)_{n \geq 1} \subset \R_+$, $(k_n)_{n \geq 1} \subset \Z^2$ such that
\begin{equation}
\label{eq: k_condition}
y_n \rightarrow 0, \hskip1cm v(y_n) \in \langle k_n \rangle.
\end{equation}
Define
\begin{equation}
\label{defF}
f(\theta, R) = \sum_{n\geq 1} \epsilon _n a_n(R)\cos(\theta \cdot k_n^{\bot}), \end{equation}
for some constants $\epsilon_n \in \R$ and some functions $a_n: \R^2 \rightarrow \R$ of disjoint support, flat at $0$, such that 
\begin{equation}
\label{aconstant}
a_n\mid_{\Lambda_{y_n}} = 1.
\end{equation} 
If $f$ is at least of class $C^1$ then for any $\epsilon > 0$ the Hamiltonian vector field associated to $H_\epsilon$ is given by
\begin{equation*}
\label{Hvectfield}
 X_{H_\epsilon}(\theta, R) = \Big(\nabla h(R) + \epsilon\cos(\theta \cdot k^\bot)a_n(R), \sum_{n\geq 1} \epsilon\epsilon_n a_n(R) \sin(\theta \cdot k_n^{\bot})k_n^{\bot}\Big).
\end{equation*}
Hence for any $\theta^{(n)} \in \T^2$ such that $\sin(\theta^{(n)} \cdot k_n^{\bot}) = 1$ and any $R^{(n)} \in \Lambda_{y_n}$ the previous equation becomes
 \begin{equation*}
\label{flowalongline}
 X_{H_\epsilon}\big(\theta^{(n)}, R^{(n)}\big) = \big(\nabla h(R^{(n)}), \epsilon\epsilon_nk_n^{\bot}\big). 
\end{equation*} 
Then, by the same arguments of last section, the solution $z^{(n)}(t)$ with initial conditions 
$\left(\theta^{(n)},R^{(n)}\right)$ is completely contained in $\T^2 \times \Lambda_{y_n}$ and obeys 
 \begin{equation} 
 \label{linearspeed}
 z_R^{(n)}(t) = R^{(n)} + t\epsilon\epsilon_n k_n^{\bot},
 \end{equation}
where we denote $z^{(n)}(t) = \big(z_\theta^{(n)}(t), z_R^{(n)}(t)\big)$. Therefore, the functions $h,$ $f$ defined above will satisfy the conclusions of Theorem \ref{thm: actionangle}. We formalize this construction in the following proposition. 

\begin{prop}
\label{prop: sequences}
Fix $\sigma > 0$. Let $v: J = (-1,1) \rightarrow \R^2$ analytic obeying: 
\begin{equation}
\label{condv}
v_2(t) \neq 0, \hskip1cm v_1'(t)v_2(t) \neq v_1(t)v_2'(t).
\end{equation}
Then there exist $h \in \mathscr{C}^\omega(0),$ $(y_n)_{n \in \N} \subset \R_+,$ $(k_n)_{n \in \N} \subset \Z^2$, $(a_n)_{n \in \N} \subset \mathscr{C}^\infty(0)$ verifying (\ref{gradyaxis}), (\ref{eq: k_condition}), (\ref{aconstant}) and positive constants $(\epsilon_n)_{n \in \N}$ such that $f$ as defined in (\ref{defF}) belongs to $\mathcal{G}^{1+\sigma}(\T_0)$ and is flat at $\T_0$. If in addition $v$ satisfies one of the following conditions: 
\begin{gather}
\label{Kol1}
\begin{array}{ccc}
v_1(0) = 0, & & v_1'(0) \neq 0, 
\end{array} \\
\nonumber \text{or} \\
\label{Kol2}
\begin{array}{ccc}
v_1(0)v_2'(0) \neq 0, & & v_1'(0)=0,
\end{array}
\end{gather}
then $h$ will be Kolmogorov non-degenerate. 
\end{prop}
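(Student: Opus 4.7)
My plan is to build $h$ explicitly as a first integral of the foliation $\{\Lambda_t\}_{t\in J}$ near the origin, and then to define $f$ as a superposition of smooth bumps localized on the ``resonant'' leaves $\Lambda_{y_n}$, with coefficients $\epsilon_n$ tuned so that the whole series lies in $\mathcal{G}^{1+\sigma}(\T_0)$.

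For $h$ I consider the analytic function
\[ F(x,y,t) = x v_1(t) + (y-t)v_2(t), \]
which vanishes exactly when $(x,y)\in\Lambda_t$ and whose $t$-derivative at $(0,0,0)$ equals $-v_2(0)\neq 0$ by (\ref{condv}). The analytic implicit function theorem then produces an analytic map $T\colon U \to J$ on some neighborhood $U$ of the origin with $T(0,t)=t$ and $(x,y)\in\Lambda_{T(x,y)}$. Setting $h(x,y) := \int_0^{T(x,y)} v_2(s)\,ds$ makes $h$ analytic near $0$ and constant on each $\Lambda_t$, and one implicit differentiation of $F=0$ gives $T_x(0,t) = v_1(t)/v_2(t)$, $T_y(0,t) = 1$, whence $\nabla h(0,t) = v(t)$, establishing (\ref{gradyaxis}). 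A second implicit differentiation yields $T_{xx}(0)$, $T_{xy}(0)$, $T_{yy}(0)$ as explicit polynomials in $v(0)$ and $v'(0)$, and plugging them into $h_{ij}(0) = v_2'(0)T_i(0)T_j(0) + v_2(0)T_{ij}(0)$ shows that $\det\partial_R^2 h(0)$ equals $-v_1'(0)^2$ under (\ref{Kol1}) and $-v_1(0)^2 v_2'(0)^2/v_2(0)^2$ under (\ref{Kol2}); both are nonzero, so $h$ is Kolmogorov non-degenerate in either case.

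Next, the second inequality in (\ref{condv}) says $t\mapsto\arg v(t)$ is a local diffeomorphism, so its image near $0$ is an open arc of directions in which rational slopes are dense. I extract $y_n \searrow 0$ together with primitive $k_n\in\Z^2$ satisfying $v(y_n)\in\langle k_n\rangle$, which gives (\ref{eq: k_condition}). Thinning if necessary, I arrange the $y_n$ to be well separated so that pairwise disjoint small disks $B_n$ around $(0,y_n)$ fit inside the domain of $h$. Inside each $B_n$ I pick $a_n\in\mathscr{C}^\infty(0)$ supported in $B_n$, flat at $0\in\R^2$, identically $1$ on a segment of $\Lambda_{y_n}$ through $(0,y_n)$, and of Gevrey class $1+\sigma$ with explicit seminorm bounds $\|\partial^\beta a_n\|_\infty \leq C_n \rho_n^{|\beta|}(\beta!)^{1+\sigma}$; such bumps are produced by the standard flat-at-a-point Gevrey cutoff construction. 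Relation (\ref{aconstant}) is built in.

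The heart of the proof, and the only real obstacle, is choosing $\epsilon_n > 0$ so that $f = \sum_n \epsilon_n a_n(R)\cos(\theta\cdot k_n^\perp)$ lies in $\mathcal{G}^{1+\sigma}(\T_0)$ and is flat at $\T_0$. Flatness follows term by term from the flatness of each $a_n$ at $0$. For the Gevrey bound, pairwise disjointness of the supports reduces $|\partial^\alpha f(z)|$ on any compact $K$ to the maximum, not the sum, of individual contributions; a mixed derivative of order $\alpha=(\alpha_\theta,\alpha_R)$ acting on the $n$-th summand is controlled by $\epsilon_n |k_n|^{|\alpha_\theta|} C_n \rho_n^{|\alpha_R|}(\alpha_R!)^{1+\sigma}$. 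Taking $\epsilon_n$ to decay like $\exp(-c_n |k_n|^{1/(1+\sigma)})$ with $c_n$ large enough to absorb the growth of $C_n$ and $\rho_n$, and using the elementary estimate $|k_n|^{|\alpha_\theta|} \leq (\alpha_\theta!)^{1+\sigma}\exp((1+\sigma)|k_n|^{1/(1+\sigma)})$, one obtains a uniform bound $\|\partial^\alpha f\|_K \leq C\rho^{|\alpha|}(\alpha!)^{1+\sigma}$, which is the required Gevrey class. Calibrating the three competing scales $|k_n|$, $(C_n,\rho_n)$, and $\epsilon_n$ is the main technical difficulty; everything else is either implicit differentiation or a standard partition-of-unity argument.
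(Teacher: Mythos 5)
Your plan follows the same route as the paper. Your construction of $h$ is, after unwinding, identical to the paper's (the implicit equation $F(x,y,t)=0$ is exactly the equation $t - x\,v_1(t)/v_2(t) = y$ solved by $\pi_2\circ\phi^{-1}$ in the paper's Lemma~\ref{lem: intham}); your implicit-differentiation calculation of $\partial_R^2 h(0)$ gives the same determinants, so the Kolmogorov non-degeneracy part is sound. The extraction of the resonant sequence and the idea of summing localized oscillating bumps with decaying weights $\epsilon_n$ are also the paper's. The gap is in the Gevrey bookkeeping for $f$, and it is a real one.

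You put each bump $a_n$ in the \emph{full} target class $\mathcal{G}^{1+\sigma}$, with bounds $\|\partial^{\beta} a_n\| \le C_n\rho_n^{|\beta|}(\beta!)^{1+\sigma}$. Because $a_n\equiv 1$ on a piece of $\Lambda_{y_n}$ but is supported in a disk of radius comparable to $y_n$, the scaling of Gevrey seminorms forces $\rho_n\gtrsim 1/y_n\to\infty$. You then propose to ``absorb the growth of $C_n$ and $\rho_n$'' into $\epsilon_n$. But $\epsilon_n$ is a single number, while $\rho_n^{|\alpha_R|}$ is unbounded over $\alpha_R$ for any fixed $n$ with $\rho_n>1$; no choice of $\epsilon_n$ can give $\epsilon_n\rho_n^{|\alpha_R|}\le c\,\rho^{|\alpha_R|}$ uniformly in $\alpha_R$. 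The only way to tame such a factor is to trade it for an extra power of $(\alpha_R!)$, exactly as you do for $|k_n|^{|\alpha_\theta|}$, but you have already spent the entire $(\alpha_R!)^{1+\sigma}$ budget on $a_n$ itself; spending it again would land you in $\mathcal{G}^{1+2\sigma+\sigma^2}$, not $\mathcal{G}^{1+\sigma}$.

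The missing idea is a Gevrey-index slack. Build the bumps $a_n$ in the strictly smaller class $\mathcal{G}^{1+\sigma/2}$, so each contributes only $(\alpha_R!)^{1+\sigma/2}$, and then use the leftover room $(\alpha!)^{\sigma/2}$ to absorb the factor $(\rho_a/y_n)^{|\alpha|}$ via an estimate of the form
\[
\frac{\exp\left(-y_n^{-2/\gamma}\right)}{y_n^{|\alpha|}} \le C\,(\alpha!)^{\gamma},\qquad \gamma=\sigma/2,
\]
which is Lemma~\ref{estimate} of the paper, and which dictates that $\epsilon_n$ must decay like $\exp(-1/y_n^{\mu})$ for an appropriate $\mu>0$ (not like $\exp(-c_n|k_n|^{1/(1+\sigma)})$ as you propose). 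It also pays to prove the quantitative link $|k_n|_{\max}\le C/y_n$ (the paper's Lemma~\ref{lemmaseq}), so that the two exploding scales $|k_n|$ and $\rho_n$ collapse to the single scale $1/y_n$ and one choice of $\epsilon_n$ kills both; your sketch treats them as independent, which is not fatal but compounds the bookkeeping you have not yet closed.
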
 

Before proving Proposition \ref{prop: sequences} let us show how it implies Theorem \ref{thm: actionangle}. 

\begin{proof}[Proof of Theorem \ref{thm: actionangle}.] Suppose $\omega_2 \neq 0$. Take any analytic path $v: (-1, 1) \rightarrow \R$ with $v(0) = \omega$ satisfying (\ref{condv}) and obeying (\ref{Kol1}) or (\ref{Kol2}). Let $h$, $f$, $(y_n)_{n \in \N}$, $(k_n)_{n \in \N},$ $(a_n)_{n \in \N}$ as in Proposition \ref{prop: sequences} when applied to $v$. Notice that in this case $h$ is Kolmogorov non-degenerate, $f$ is flat at $\T_0$ and for any $\epsilon \in \R$ the torus $\T_0$ is invariant for the Hamiltonian $H_\epsilon = h + \epsilon f$ and has rotation vector $\omega.$

For all $n \in \N$ let $\theta^{(n)} \in \T^2$ such that $\sin(\theta^{(n)} \cdot k_n^{\bot}) = 1$. By construction, for $\epsilon \in \R$ fixed, the solution $z^{(n)}(t) = \big(z_\theta^{(n)}(t), z_R^{(n)}(t)\big)$ of the Hamiltonian flow of $H_\epsilon$ with initial conditions $(\theta^{(n)},(0,y_n)) $ obeys (\ref{linearspeed}) with $R^{(n)} = (0, y_n),$ namely
\begin{equation}
\label{eq: proof_diffusion}
z_R^{(n)}(t) = (0,y_n) + t\epsilon \epsilon_n k_n^{\bot},
\end{equation}
for every $t \in \R$ such that the flow is defined. This clearly implies the instability of the invariant torus $\T_0$. The argument for $\omega_1 \neq 0$ is completely analogous.
\end{proof}

\subsection{Proof of Proposition \ref{prop: sequences}}

We will split the construction in several steps. Lemma \ref{lem: intham} concerns the integrable Hamiltonian $h$. Explicit definitions of $(y_n)_{n \in \N},$ $(k_n)_{n \in \N}$ will be made in Lemma \ref{lemmaseq} and the functions $(a_n)_{n \in \N}$ will be constructed in Lemma \ref{boundsa}. 
\begin{lem}
\label{lem: intham}
Let $v: J \rightarrow \R^2$ as in Proposition \ref{prop: sequences} and denote $\Lambda_t = (0, t) + \langle v^\perp(t) \rangle$. Define $\varphi(t) = \frac{v_1(t)}{v_2(t)}$ and suppose that $0< |\varphi ' (t)| < \beta$ for every $t \in J$. Then there exist $\delta > 0$, an open set $U \subset \R^2$ and an analytic function $h: U \rightarrow \R$ such that:
 
 \begin{enumerate}
 \item $U = \bigsqcup_{t\in J} \Lambda_t^\delta$ where $\Lambda_t^\delta = \Lambda_t \cap (-\delta,\delta) \times \R$. In particular $ \lbrace 0 \rbrace \times J \subset U$.
 \item $h$ obeys (\ref{gradyaxis}) for every $t \in J$.
 \item If $v$ satisfies (\ref{Kol1}) or (\ref{Kol2}) then $h$ is Kolmogorov non-degenerate.
 \end{enumerate}
\end{lem}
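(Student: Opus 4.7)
The plan is to parametrize the foliation $\{\Lambda_t\}_{t \in J}$ by the analytic map
\[ \Psi(t,s) := (0,t) + s\,v^\perp(t) = (-s v_2(t),\, t + s v_1(t)), \]
show that it is an analytic diffeomorphism from a suitable open set onto a neighbourhood of $\{0\}\times J$, and then define $h$ as the pullback via $\Psi^{-1}$ of a primitive of $v_2$. The Jacobian of $\Psi$ at $(t,0)$ equals $v_2(t)\neq 0$, so $\Psi$ is already a local analytic diffeomorphism near $\{s=0\}$.

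For global injectivity on a strip around the $y$-axis I would solve $\Psi(t,s) = \Psi(t',s')$ with $t\neq t'$: eliminating $s,s'$ shows that the common $x$-coordinate equals
\[ x_0 \;=\; \frac{t-t'}{\varphi(t) - \varphi(t')} \;=\; \frac{1}{\varphi'(\xi)} \]
for some $\xi$ between $t$ and $t'$, by the mean value theorem. Since $|\varphi'|<\beta$ on $J$, this forces $|x_0|>1/\beta$. Taking $\delta = 1/\beta$ and $D_\delta := \{(t,s)\in J\times\R : |s v_2(t)|<\delta\}$, the Jacobian $v_2(t)[1 + s\,\varphi'(t)\,v_2(t)]$ remains nonzero on $D_\delta$ (the bracket is strictly positive since $|s\,\varphi'(t)\,v_2(t)|<\delta\beta = 1$), so $\Psi:D_\delta\to U$ is an analytic diffeomorphism onto $U := \bigsqcup_{t\in J}\Lambda_t^\delta$, which proves (1).

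Set $h(x,y) := H(\tau(x,y))$, where $\tau$ is the first component of $\Psi^{-1}$ and $H(t):=\int_0^t v_2(r)\,dr$. Then $h$ is analytic and constant on each $\Lambda_t$. Inverting $D\Psi(t,0)$ gives $\nabla \tau(0,t)=(\varphi(t),1)$, whence
\[ \nabla h(0,t) \;=\; H'(t)\,\nabla\tau(0,t) \;=\; v_2(t)(\varphi(t),1) \;=\; v(t), \]
yielding (2). For (3), I would pin down $D^2 h(0,0)$ from two sources: differentiating $\nabla h(0,t) = v(t)$ in $t$ at $t=0$ gives $h_{xy}(0,0) = v_1'(0)$ and $h_{yy}(0,0) = v_2'(0)$, while differentiating the tangential identity $\nabla h(\Psi(t,s))\cdot v^\perp(t) = 0$ in $s$ yields $v^\perp(0)^T D^2 h(0,0)\, v^\perp(0) = 0$, i.e.
\[ v_2(0)^2\, h_{xx}(0,0) - 2 v_1(0)\,v_2(0)\,v_1'(0) + v_1(0)^2\, v_2'(0) = 0, \]
which fixes $h_{xx}(0,0)$. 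Substituting into $\det D^2 h(0,0) = h_{xx}(0,0)\,v_2'(0) - v_1'(0)^2$, assumption (\ref{Kol1}) gives $\det = -v_1'(0)^2\neq 0$ while assumption (\ref{Kol2}) gives $\det = -v_1(0)^2 v_2'(0)^2/v_2(0)^2\neq 0$.

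The main technical point is the injectivity of $\Psi$ on $D_\delta$, which hinges on the uniform lower bound $|x_0|>1/\beta$ for the $x$-coordinate of any line intersection; once $\delta$ is fixed in this way, everything else reduces to chain-rule bookkeeping.
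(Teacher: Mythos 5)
Your proof is correct and follows essentially the same strategy as the paper: straighten the foliation $\{\Lambda_t\}$ by an analytic diffeomorphism (the paper uses $\phi(x,y)=(x,\,y-x\varphi(y))$, which is your $\Psi$ with the leaf-parameter rescaled to the $x$-coordinate), take $\delta=1/\beta$ from a mean-value-theorem bound for injectivity, and set $h$ to be the pullback of a primitive of $v_2$. The only small stylistic difference is your derivation of $h_{xx}(0)$ from the tangential identity $v^\perp(0)^TD^2h(0)\,v^\perp(0)=0$, where the paper just asserts the value after "a simple calculation"; the resulting formulas and determinants agree.
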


\begin{proof} Let $\delta = 1/\beta$ and define $\phi: (-\delta,\delta) \times J \rightarrow \R^2$ by 
\[ \phi(x,y) = \left( x,y-x \varphi(y) \right).\]
Denote $V= (-\delta,\delta) \times J$ and $U = \phi(V)$. We will show that $\phi$ is an analytic diffeomorphism. Given $(x,y) \in V$ 
\[ \det (D\phi) (x,y) = 1-x\varphi '(y)\]
which is non-zero by the definition of $\delta$. Thus $\phi $ is a local analytic diffeomorphism. To prove injectivity let $(x,y),$ $(x',y') \in V$ be such that $\phi (x,y) = \phi(x',y')$. This clearly implies $x = x'$ and 
\[ y - x\varphi(y) = y'-x\varphi(y').\]
If $y \neq y'$ we would have \[0 < |y - y'| = |x||\varphi(y)-\varphi(y')| < \delta \beta | y-y'| = |y-y'| \] which is impossible. Thus $y = y'$. This shows that $\phi$ is injective and therefore an analytic diffeomorphism. Notice that $\phi((-\delta,\delta) \times \lbrace t \rbrace ) = \Lambda_t^\delta$ for every $t \in J$ and thus $U$ verifies the first assertion in the lemma. Define $g: J \rightarrow \R$, $h: U \rightarrow \R$ by 
\[ g(y)= \int_0^y v_2(t)dt, \hskip1cm h(x,y) = g \circ \pi _2 \circ \phi^{-1}(x,y), \]
where $\pi_2 : \R^2 \rightarrow \R$ denotes the projection to the second coordinate. For all $y \in J$, $w \in \Lambda_y^\delta$, we have $\pi_2 \circ \phi^{-1} (w) = y$ which implies $h(w) = g(y).$ Therefore $h \mid_{\Lambda_y^\delta}$ is constant and $\langle \nabla h(w), v^{\bot}(y) \rangle = 0$ for every $ y \in J$ and every $w \in \Lambda_y^\delta$. In particular 
 \begin{equation} 
 \label{gradh} 
\partial_x h(0,y) v_2(y) - \partial_y h(0,y)v_1(y) = 0.
 \end{equation} 
As $\phi^{-1}(0,y) = (0,y)$ it follows that $\partial_y h (0,y) =g'(y) = v_2(y).$  By (\ref{gradh}), for any $y \in J$ $\partial _x h (0,y) = v_1(y)$ and thus 
\[\nabla h (0,y) = v(y),\]
 which proves the second assertion. It remains to show $h$ is Kolmogorov non-degenerate whenever (\ref{Kol1}) or (\ref{Kol2}) are satisfied. Differentiating the equation above
\[ \partial_y^2 h (0) = v_2'(0), \hskip1cm \partial_{xy}^2 h(0)= v_1'(0). \]
A simple calculation leads to 
\[ \partial_{xx}^2h(0) = v_2'(0)\varphi^2(0) + 2v_2(0)\varphi(0)\varphi'(0). \]
Hence 
\[ \det|\partial^2h (0)| = (v_2'(0)\varphi(0))^2 + 2v_2(0)v_2'(0)\varphi(0)\varphi'(0) - (v_1'(0))^2. \] 
If $v_1(0) =0$, $v_1'(0) \neq 0$
 \[ \det|\partial^2h (0)| = -(v_1'(0))^2 \neq 0 .\] 
If $v_1(0)v_2'(0) \neq 0$, $v_1'(0)=0$
\[\varphi'(0) = -\varphi(0)\dfrac{v_2'(0)}{v_2(0) }, \]
and we have
 \[ \det|\partial^2h (0)| = - (v_2'(0)\varphi(0))^2 \neq 0. \]
 \end{proof} 
\begin{lem}
\label{lemmaseq}
Let $v: J \rightarrow \R^2$ and $\varphi: J \rightarrow \R$ as in Lemma \ref{lem: intham}. There exist sequences $(y_n)_{n \geq 1} \subset J,$ $(k_n)_{n \geq 1} \subset \Z^2$ and a constant $C > 0$ such that: 
\begin{enumerate}
\item $0 < 2y_{n+1} \leq y_n,$
\item $v(y_n) \in \langle k_n \rangle$ \text{ and } $\vert k_n \vert_{\max} \leq C/y_n$. \end{enumerate}
\end{lem}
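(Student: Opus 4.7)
The plan is to reduce the lemma to a question of rational approximation of $\varphi$ near $0$, and then build the sequences by an elementary greedy induction.

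First, I would observe that since $v_2$ does not vanish on $J$ (by (\ref{condv})), one can factor $v(y) = v_2(y)(\varphi(y), 1)$, so the condition $v(y) \in \langle k \rangle$ with $k = (p, q) \in \Z^2$ is equivalent to $\varphi(y) = p/q \in \mathbb{Q}$. Thus the lemma reduces to producing $y_n > 0$ with $2 y_{n+1} \leq y_n$ on which $\varphi(y_n) = p_n/q_n$ is rational and $\max(|p_n|, q_n) \leq C/y_n$. Shrinking $J$ to a compact neighborhood of $0$ (harmless since the sequences will tend to $0$), I may assume $0 < c \leq |\varphi'(t)| \leq \beta$ throughout $J$, and, after replacing $t$ by $-t$ if necessary, that $\varphi' > 0$. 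Writing $\alpha := \varphi(0) = \omega_1/\omega_2$, one then has $cy \leq \varphi(y) - \alpha \leq \beta y$ for every $y > 0$ in $J$.

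Next, I would construct the sequences inductively. Given $y_n > 0$ with $\varphi(y_n) \in \mathbb{Q}$ (a suitable $y_1$ being obtained by the same recipe), I choose an integer $q_{n+1} > 2/(c y_n)$, large enough that the rational defined below lies in $\varphi(J)$, and I take $p_{n+1}$ to be the smallest integer strictly greater than $q_{n+1}\alpha$, so that $0 < p_{n+1}/q_{n+1} - \alpha \leq 1/q_{n+1}$. Setting $y_{n+1} := \varphi^{-1}(p_{n+1}/q_{n+1})$ and $k_{n+1} := (p_{n+1}, q_{n+1})$, the two-sided estimate
\[ c\, y_{n+1} \;\leq\; p_{n+1}/q_{n+1} - \alpha \;\leq\; 1/q_{n+1} \]
yields in one stroke the geometric decay $y_{n+1} \leq 1/(c q_{n+1}) \leq y_n/2$ and the denominator bound $q_{n+1} \leq 1/(c y_{n+1})$. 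Combining the latter with $|p_{n+1}| \leq (|\alpha|+1) q_{n+1}$ (valid once $1/q_{n+1} \leq 1$) gives $|k_{n+1}|_{\max} \leq (|\alpha|+1)/c \cdot 1/y_{n+1}$, so $C := (|\alpha|+1)/c$ will work, possibly enlarged to absorb finitely many initial indices.

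The main (and essentially only) point to watch is the simultaneous control of $y_{n+1}$ and $q_{n+1}$: the geometric decay and the bound $|k_n|_{\max} \leq C/y_n$ must come out of one chain of inequalities, which is exactly why the monotonicity lower bound $c y_{n+1} \leq p_{n+1}/q_{n+1} - \alpha$ and the integer-approximation upper bound $p_{n+1}/q_{n+1} - \alpha \leq 1/q_{n+1}$ should be packaged together as above.
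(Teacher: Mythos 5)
Your proof is correct and takes a genuinely different route from the paper's. The paper's proof passes to the angle $\psi(t) = \tan^{-1}(\varphi(t))$ of $v(t)$ with the $x$-axis, uses $\psi'(0) \neq 0$ to show that $\psi$ stretches the dyadic intervals $J_n = [2^{-2n}/\rho,\, 2^{-2n+1}/\rho]$ onto arcs of length $> 2^{-2n}$ on the circle of directions, and then invokes a counting claim --- the set of directions $k/\Vert k\Vert_2$ with $|k|_{\max} = n$ cuts $\mathbb{T}$ into $8n$ arcs, each of length $< 1/n$ --- to locate an integer direction with $|k_n|_{\max} = 2^{2n}$ inside $\psi(J_n)$, and hence a $y_n \in J_n$ with $v(y_n) \in \langle k_n \rangle$. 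Your version bypasses the circle of directions entirely: since $v_2 \neq 0$, the condition $v(y) \in \langle (p,q) \rangle$ is literally $\varphi(y) = p/q$, so the lemma becomes a rational-approximation statement about $\alpha = \varphi(0)$, and the single chain $c\,y_{n+1} \leq p_{n+1}/q_{n+1} - \alpha \leq 1/q_{n+1}$ delivers both the geometric decay and the bound $|k_n|_{\max} \leq C/y_n$ at once, with $C = (|\alpha|+1)/c$. This is more elementary (no lattice-point counting needed) and makes the role of $\varphi' \neq 0$ --- it is exactly what furnishes the lower bound $c\,y \leq |\varphi(y)-\alpha|$ --- completely transparent, at the mild cosmetic cost of breaking the symmetry between the two coordinates.

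One small repair is needed: the reduction ``after replacing $t$ by $-t$ if necessary, $\varphi' > 0$'' does not quite stand as written, because the lemma requires $y_n > 0$ and the substitution $t \mapsto -t$ would flip the sign of the $y_n$ you produce. The cleaner fix is to keep the original parameter and, in the case $\varphi' < 0$ near $0$, take $p_{n+1}$ to be the \emph{largest} integer strictly below $q_{n+1}\alpha$, so that $-1/q_{n+1} \leq p_{n+1}/q_{n+1} - \alpha < 0$ and $c\,y_{n+1} \leq \alpha - p_{n+1}/q_{n+1} \leq 1/q_{n+1}$; every subsequent estimate then goes through verbatim.
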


\begin{proof} Let $\psi : J \rightarrow \R$ given by $\psi(t)= \tan^{-1}(\varphi(t)).$ This function measures the angle of $v(t)$ with the $x$-axis. From the hypotheses there exists $\rho$ positive, such that $\psi '(0)= 2\rho > 0.$ Let $t_0 > 0$ such that $\psi '(t) > \rho $ for $|t| < t_0$. Then for every interval $I \subset (-t_0,t_0)$ we have $|\psi(I)| > \rho|I|,$ where we denote by $| \cdot |$ the Lebesgue measure of the interval. Let 
\[ J_n = \left[ \dfrac{1}{2^{2n}\rho}, \dfrac{1}{2^{2n-1}\rho} \right]. \] 
For $n$ sufficiently large $J_n$ is contained in $(-t_0,t_0)$ and thus $|\psi(J_n)| > 1/2^{2n}.$

\begin{claim}
Let $I \subsetneq \mathbb{T}$ be an interval such that $|I| > \frac{1}{n}.$ There exists $k \in \Z^2$ obeying \[ |k|_{\max} = n, \hskip1cm \dfrac{k}{\Vert k \Vert _2} \in I.\]
\end{claim}
\begin{proof}[Proof of the Claim.] Let \[Z_n = \left\lbrace \dfrac{k}{\Vert k \Vert _2} \bigg\rvert |k|_{\max} = n \right\rbrace.\]
This set splits $\mathbb{T}$ in $8n$ intervals. It suffices to note that the biggest interval has size \[ \sin^{-1}\left( \dfrac{1}{\sqrt{n^2+1}}\right) < \dfrac{1}{n} .\]
\end{proof} 
By the claim there exists $k_n \in \Z^2$ such that \[|k_n|_{\max} = 2^{2n}, \hspace{1cm} \dfrac{k}{\Vert k \Vert _2} \in \psi(J_n).\]
Since $\psi$ measures the angle of $v$ with the $x$-axis, there exists $y_n \in J_n$ such that $v(y_n)$ belongs to $\langle k_n \rangle $. Defining $y_n$, $k_n$ in this fashion we obtain the result. 
\end{proof}

\begin{lem}
\label{boundsa}
Let $v: J \rightarrow \R^2$ as in Lemma \ref{lem: intham} and $(y_n)_{n \in \N}$, $(k_n)_{n \in \N}$ as in Lemma \ref{lemmaseq}. Denote $\Lambda_t = (0, t) + \langle v^\perp(t) \rangle$. For every $\gamma > 0$ there exist a sequence of functions $\lbrace a_n : U \rightarrow \R \rbrace_{n\geq 1}$ in $G^{1+\gamma}(U)$, obeying (\ref{aconstant}), such that:

\begin{enumerate}
\item $\textup{Supp}(a_n) \subset \lbrace R \in \R^2 \mid d(R,\Lambda_{y_n}) < y_n/4 \rbrace$.
\item $\textup{Supp}(a_n) \cap \textup{Supp}(a_m) = \emptyset$ for $n \neq m$ sufficiently large.
\item There exist constants $c_a,\rho_a > 0$ such that \[ \Vert \partial^\alpha a_n \Vert _U \leq c_a\left(\dfrac{\rho_a}{y_n}\right)^{|\alpha|}(\alpha !)^{1+\gamma} \]
for any multi-index $\alpha$ and any $n \geq 1$. 
\end{enumerate}
\end{lem}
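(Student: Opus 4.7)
The plan is to use the adapted coordinates supplied by Lemma \ref{lem: intham}: the analytic diffeomorphism $\phi: V = (-\delta,\delta) \times J \to U$ sends each horizontal slice $(-\delta,\delta) \times \{t\}$ onto $\Lambda_t^\delta$, so the analytic map $\pi_2 \circ \phi^{-1}: U \to J$ is constantly equal to $t$ on $\Lambda_t^\delta$. Defining $a_n = \chi_n \circ \pi_2 \circ \phi^{-1}$ for a suitable one-variable Gevrey bump $\chi_n$ concentrated near $y_n$ therefore reduces (\ref{aconstant}) and the support/disjointness conditions to their one-dimensional analogues.

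Concretely, I would first fix a Gevrey bump $\chi \in \mathcal{G}^{1+\gamma}(\R)$ with $\chi \equiv 1$ on $[-1/2,1/2]$, $\mathrm{Supp}(\chi) \subset [-1,1]$ and uniform estimates $|\chi^{(k)}(t)| \leq c_\chi \rho_\chi^k (k!)^{1+\gamma}$; such a bump exists precisely because $1+\gamma > 1$. Then set
\[
 \chi_n(t) = \chi\!\left(K\,\frac{t-y_n}{y_n}\right), \qquad a_n(R) = \chi_n\bigl(\pi_2 \circ \phi^{-1}(R)\bigr),
\]
for a large constant $K>0$ to be chosen. Property (\ref{aconstant}) is immediate. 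Property (1) follows from $\mathrm{Supp}(\chi_n) \subset [y_n(1-1/K),\, y_n(1+1/K)]$ combined with the bi-Lipschitz behaviour of $\phi$ on a fixed compact neighbourhood of $0$, after taking $K$ large. Property (2) follows from the geometric decay $2y_{n+1} \leq y_n$ of Lemma \ref{lemmaseq}: for $K$ large and $n$ sufficiently large the corresponding $y$-intervals become pairwise disjoint, and since $\phi$ is a diffeomorphism this disjointness transfers to $U$.

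The remaining point is the Gevrey bound (3). Differentiating $\chi_n$ directly yields $|\chi_n^{(k)}(t)| \leq c_\chi (K\rho_\chi/y_n)^k (k!)^{1+\gamma}$, so the scaling in $1/y_n$ is present already at the one-dimensional level. To transfer it to $a_n$ I would invoke a Faà di Bruno-type composition estimate for Gevrey classes. The crucial observation is that $\pi_2 \circ \phi^{-1}$ is analytic and $n$-independent on a fixed compact neighbourhood of $0$ that contains all the supports of the $a_n$ near the origin; hence its derivatives satisfy uniform analytic bounds $|\partial^\alpha(\pi_2 \circ \phi^{-1})| \leq M L^{|\alpha|}\alpha!$ independent of $n$. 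Combining these with the Gevrey composition lemma then gives
\[
 \|\partial^\alpha a_n\|_U \leq c_a \left(\frac{\rho_a}{y_n}\right)^{|\alpha|} (\alpha!)^{1+\gamma},
\]
with $c_a, \rho_a$ depending only on $\chi, \gamma$ and $\phi$ but not on $n$.

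The main obstacle will be ensuring that the scaling $(1/y_n)^{|\alpha|}$ survives the composition cleanly: one has to check that the analytic constants associated to $\phi^{-1}$ multiply the Gevrey constants of $\chi_n$ by an $n$-independent factor, rather than producing spurious additional powers of $1/y_n$. Since $\phi^{-1}$ is analytic (Gevrey class $1$) and independent of $n$, this is a standard but somewhat delicate bookkeeping exercise in the Gevrey calculus, and is the step where the proof must be carried out with care. Once established, the three conclusions of the lemma follow simultaneously.
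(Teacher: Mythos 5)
Your construction is a genuinely different route from the paper's, and the place where you flag a worry is precisely the place where the paper chooses a shortcut that makes the worry disappear.

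The paper does not compose a one-variable bump with the analytic map $\pi_2 \circ \phi^{-1}$. Instead it takes a fixed $G^{1+\gamma/2}$ bump $a$ on the line and sets $a_n(R) = a\bigl(\ell_n(R)/\delta_n\bigr)$ with $\delta_n = y_n/4$ and $\ell_n$ an affine function whose linear part is a \emph{unit} vector depending on $n$ (essentially the normalized direction transversal to $\Lambda_{y_n}$). Because the inner map is affine with coefficients bounded uniformly in $n$, the chain rule is trivial: $\partial^\alpha a_n$ is just $\delta_n^{-|\alpha|}$ times a product of bounded coefficients times $a^{(|\alpha|)}$, and the estimate $\Vert \partial^\alpha a_n \Vert \leq c_a (\rho_a/y_n)^{|\alpha|}(\alpha!)^{1+\gamma}$ drops out in one line, with no Fa\`a di Bruno, no Gevrey composition lemma, and no risk of spurious powers of $1/y_n$. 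The price is that these $a_n$ are constant only along lines parallel to $\Lambda_{y_n}$, not along the nearby curves $\Lambda_t$; but that is all the statement of the lemma needs.

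Your version, $a_n = \chi_n \circ (\pi_2 \circ \phi^{-1})$, is conceptually attractive (each $a_n$ is constant on every $\Lambda_t^\delta$, so the geometry of the foliation is built in), and conditions (\ref{aconstant}), (1) and (2) come out exactly as you describe once $K$ is chosen large. But the step you defer --- the Gevrey composition bound --- is not a formality here. If you plug $\rho_f = K\rho_\chi/y_n$ into Proposition \ref{Gevreycomposition} as stated in the appendix, the prefactor $c_f c_g \rho_f$ already carries a factor $1/y_n$ that sits \emph{outside} the $(\cdot)^{|\alpha|}$ bracket, so the resulting bound is $\tfrac{1}{y_n}\bigl(\rho_a/y_n\bigr)^{|\alpha|}(\alpha!)^{1+\gamma}$, which is not of the required form (it is unbounded at $\alpha = 0$). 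To close the argument you really do need to go back to the Fa\`a di Bruno sum and observe that the worst term (all $|\alpha|$ derivatives landing on $\chi_n$) gives exactly $(K\rho_\chi c_\psi \rho_\psi / y_n)^{|\alpha|}$ with no extra factor, while the lower-order terms have fewer powers of $1/y_n$ and are harmless for $y_n < 1$. That is a correct and standard argument, but you should carry it out rather than merely flag it, because the black-box composition proposition in the paper's appendix is not sharp enough to deliver the bound in the stated form. Alternatively, you can sidestep the issue entirely by adopting the paper's affine inner map.
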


\begin{proof}
Let $a: \R \rightarrow \R$
\[ a(t-1/2) = \left\lbrace \begin{array}{ccc} 0 & \text{ if } & t \notin (0,1), \\
\exp\left(\dfrac{-1}{((1-t)t)^{\gamma /2}}\right) &\text{ if } & t \in (0,1).
\end{array} \right. \]
Denote $\delta_n = y_n/4,$ $w_n = \big(w_1^{(n)},w_2^{(n)}\big) = \frac{v^{\bot}(y_n)}{\Vert v^{\bot}(y_n)\Vert_2}$ and let $a_n : U \rightarrow \R$
\[ a_n(R) = a\left( \dfrac{\langle R, v_n \rangle - y_n w_2^{(n)}}{\delta_n} \right). \]
Then $a_n$ is a smooth function obeying (\ref{aconstant}) and such that $a_n(R)=0$ whenever $d(R,\Lambda_{y_n}) > y_n/4$. This proves the first assertion. The second assertion follows easily from the first and the definition of $y_n$. The function $a$ belongs to $G^{1+\gamma /2}(\R)$ (see \cite{ramis_devissage_1978}), thus there exist constants $m,$ $c > 0$ such that \[ \Vert a^{(k)} \Vert_R \leq m c^k(k!)^{1+\gamma /2} \]
Given a multi-index $\alpha = (\alpha_1,\alpha_2) \in \N^2$ 
\[ \partial^{\alpha} a_n (R) = \frac{{w_1^{(n)}}^{\alpha_1}{w_2^{(n)}}^{\alpha_2}}{\delta_n^{|\alpha|}}a^{(|\alpha|)}\left( \dfrac{\langle R, v_n \rangle - y_n w_2^{(n)}}{\delta_n} \right). \]
Thus
 \[ \Vert \partial^\alpha a_n \Vert _U \leq m\left(\dfrac{c}{\delta_n}\right)^{|\alpha|}(|\alpha|!)^{1+\gamma /2}. \] 
Note that $(|\alpha|!)^{1+ \gamma /2} \leq (\alpha !)^{1+\gamma}$ for $|\alpha|$ sufficiently large. So there exists $m' >0$ such that 
 \[ \Vert \partial^\alpha a_n \Vert _U \leq mm'\left(\dfrac{4c}{y_n}\right)^{|\alpha|}(\alpha!)^{1+\gamma}.\]
 Take $c_a = mm'$, $\rho_a = 4c$.
\end{proof}
 
\begin{proof}[Proof of Proposition \ref{prop: sequences}.] Let $h: U \rightarrow \R$ as in Lemma \ref{lem: intham}. By construction, $h$ depends only on the analytic function $v$ and it will be Kolmogorov non-degenerate if conditions (\ref{Kol1}) or (\ref{Kol2}) are satisfied. Let $h$ as in Lemma \ref{lem: intham}, $(y_n)_{n \in \N}$, $(k_n)_{n \in \N}$ as in Lemma \ref{lemmaseq} and $(a_n)_{n \in \N}$ as in Lemma \ref{boundsa} with $\gamma = \sigma / 2$. Denote $\rho = 1/\sigma$ and define $\epsilon_n = \exp(-1/y_n^\rho)$ for all $n \in \N$. 

It follows from the previous definitions that $f$, which is given by (\ref{defF}), is a well defined function of class $C^\infty$ on $\mathbb{T}^2 \times (U \,\setminus\, \lbrace 0 \rbrace)$. Thus it suffices to show that $f$ admits continuous derivatives of all orders on $\T^2 \times \lbrace 0 \rbrace$ and that it is a Gevrey-smooth function. We may suppose WLOG that condition 2 of Lemma \ref{boundsa} is true for all $n \neq m$. Indeed if this were not the case it suffices to take $\delta$ in Lemma \ref{lem: intham} smaller as to guarantee 
\[ (-\delta,\delta) \times \R \cap \lbrace R \in \R^2\mid d(R, \Lambda_{y_n}) < y_n/4, d(R, \Lambda_{y_m}) < y_m/4 \rbrace= \emptyset.\] 
Let $\alpha = (\tau,\beta) \in \N^2 \times \N^2$ and $(\theta,R) \in \textup{Supp}(a_n)$. By Lemmas \ref{lemmaseq} and \ref{boundsa} 
\begin{align*}
|\partial^\alpha f(\theta, R)| & \leq |\epsilon _n (\partial ^{\tau} a_n)(R)||k_n^{\bot}|_{\max}^{|\beta|} \\
 & \leq \epsilon_n c_a\left(\dfrac{\rho_a}{y_n}\right)^{|\tau|}(\alpha !)^{1+\gamma} \left( \dfrac{C_1}{y_n} \right)^{|\beta|} \\
& \xrightarrow[n \rightarrow \infty]{} 0 .
\end{align*} 
Thus $f$ is infinitely derivable on $\T^2 \times \lbrace 0 \rbrace$. Moreover $f$ is flat on this set. By Lemma \ref{estimate} (which is proved below) there exists $C_2 > 0$, independent of $\alpha$ and $n$, such that \[ \dfrac{\epsilon_n}{y_n^{|\alpha|}} = \dfrac{e^{-1/y_n^{\rho}}}{y_n^{|\alpha|}} \leq C_2(\alpha!)^{\gamma}.\] 
Hence taking $c =c_a C_2$ and $\rho = \max\lbrace \rho_a,C_1\rbrace$ 
\begin{align*}
\Vert \partial^\alpha (H-h) \Vert_U & \leq \sup_{n \geq 1} \epsilon _n \Vert (\partial ^{\alpha} a_n)\Vert_U|k_n^{\bot}|_{\max}^{|\beta|} \\
& \leq c_aC_2 \rho_a^{|\tau|} C_1^{|\beta|} (\alpha !)^{1+2\gamma}\\ 
& \leq c\rho^{|\alpha|} (\alpha !)^{1+\sigma}.
\end{align*} 
Therefore $f \in G^{1+\sigma}(0)$.
\end{proof}

\begin{lem}
\label{estimate}
Let $d \in \N$, $\gamma >0$. There exist a constant $C >0$ , such that 
\[ \dfrac{\exp\left(-y^{-2/\gamma}\right)}{y^{|\alpha|}} \leq C (\alpha!)^{\gamma}\] for every $y > 0$ and every $\alpha \in \N^d$. 

\end{lem}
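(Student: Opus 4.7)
The plan is to reduce the inequality to a dimensionless comparison by a change of variable, and then close the argument with Stirling's formula.

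Setting $s = y^{-2/\gamma}$, so that $y = s^{-\gamma/2}$ and $y^{-|\alpha|} = s^{|\alpha|\gamma/2}$, the target bound becomes equivalent to
\[ e^{-s}\, s^{|\alpha|\gamma/2} \leq C\, (\alpha!)^{\gamma}, \qquad s > 0,\ \alpha \in \N^d. \]
For any fixed $a \geq 0$, the function $s \mapsto e^{-s} s^{a}$ attains its maximum on $(0, \infty)$ at $s = a$, with value $a^{a} e^{-a}$ (using $0^{0} = 1$). Taking $a = |\alpha|\gamma/2$ eliminates $s$ altogether and reduces the claim to finding $C > 0$ with
\[ \Bigl(\tfrac{n\gamma}{2}\Bigr)^{n\gamma/2}\, e^{-n\gamma/2} \leq C\, (\alpha!)^{\gamma} \qquad \text{for all } \alpha \in \N^{d},\ n := |\alpha|. \]

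For the lower bound on $(\alpha!)^{\gamma}$ I will use the multinomial inequality $\alpha! \geq n!/d^{n}$ (a direct consequence of $\binom{n}{\alpha} \leq d^{n}$), combined with the elementary Stirling bound $n! \geq (n/e)^{n}$, to get
\[ (\alpha!)^{\gamma} \geq \Bigl(\frac{n}{de}\Bigr)^{n\gamma} \qquad (n \geq 1). \]
It then suffices to verify
\[ \Bigl(\tfrac{n\gamma}{2}\Bigr)^{n\gamma/2}\, e^{-n\gamma/2} \leq C \Bigl(\frac{n}{de}\Bigr)^{n\gamma}, \]
with $C$ independent of $n$. A short algebraic manipulation rewrites the ratio of the left- to the right-hand side as
\[ \Bigl(\frac{\gamma\, d^{2}\, e}{2n}\Bigr)^{n\gamma/2}. \]
This quantity tends to $0$ as $n \to \infty$ and is bounded on any initial range of $n$, so its supremum over $n \in \N$ is finite; taking $C$ larger than this supremum (and than $1$ to cover the trivial case $n = 0$) yields the lemma.

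The argument is a sequence of routine reductions, so no single step is a real obstacle. The only point requiring any thought is the substitution $s = y^{-2/\gamma}$ at the outset, which is chosen precisely so that after applying the elementary maximum $\sup_{s > 0} e^{-s}s^{a} = a^{a} e^{-a}$ and a Stirling-type lower bound on $\alpha!$, the remaining inequality becomes a tractable comparison between two explicit expressions in $n$.
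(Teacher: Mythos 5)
Your proof is correct, though it takes a genuinely different (if equally elementary) decomposition from the paper's. The paper factors the left-hand side into a product of $d$ one-variable pieces, $\prod_{i=1}^d y^{-\alpha_i}\exp\bigl(-y^{-2/\gamma}/d\bigr)$, maximizes each factor over $y>0$ with $k=\alpha_i$ playing the role of your $a$, and multiplies the resulting one-dimensional bounds to reconstruct $(\alpha!)^\gamma = \prod_i(\alpha_i!)^\gamma$ directly. You instead keep the exponential intact, substitute $s=y^{-2/\gamma}$ to land on the clean extremal problem $\sup_{s>0}e^{-s}s^a = a^a e^{-a}$, obtain a bound depending only on $n=|\alpha|$, and then recover the multi-index factorial via $\alpha!\geq n!/d^n$. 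The two routes meet at the same Stirling-type comparison; the paper's version exploits the multiplicative structure of $\alpha!$ and thereby avoids the multinomial inequality entirely, while yours makes the one-variable maximization slightly more transparent at the cost of that extra lower bound on $\alpha!$. Both are complete and yield the same conclusion, including the trivial case $\alpha=0$, which you correctly handle by taking $C\geq 1$.
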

\begin{proof} Fix $k \in \N$ and let $f_k: \R_+ \rightarrow \R$ \[ f_k(y) = \dfrac{1}{y^k}\exp\left(\dfrac{-y^{2/\gamma}}{d}\right).\] 
This function clearly converges to 0 as $y \rightarrow 0$. Calculating its derivative we can conclude that it attains it maximum at the point \[ y = \left( \dfrac{2}{d\gamma k}\right)^{\gamma/2}.\] 
Using Stirling's formula 
\begin{align*} 
\lim_{k \rightarrow \infty} \dfrac{\max\limits_{t \in \R} f_k(t)}{k!^\gamma} & \leq \lim_{ k \rightarrow \infty} \dfrac{ (de \gamma k)^{\gamma k /2}}{ 2^{\gamma k/2}k^{\gamma k}(2\pi k)^{\gamma/2}} \\
& \leq \lim_{ k \rightarrow \infty} \dfrac{(de\gamma )^{\gamma k/2}}{\gamma^{\gamma/2}k^{k\gamma/2}} \\
& = 0 .
\end{align*}
Thus there exists a constant $M$ depending only on $d,\gamma$ such that \[ \dfrac{1}{y^k}\exp\left(\dfrac{-y^{2/\gamma}}{d}\right) \leq M k!^\gamma\] for every $k \geq 1$ and every $y \in \R_+$. Hence, for $C = M^d$ and  for every $\alpha \in \N^d$
\[ \dfrac{\exp\left(-y^{-2/\gamma}\right)}{y^{|\alpha|}} = \prod_{i=1}^{d} \dfrac{1}{y^{|\alpha_i|}}\exp\left(\dfrac{-y^{-2/\gamma}}{d}\right) \leq C (\alpha!)^{\gamma}.\]
\end{proof} 

 \section{Instability for elliptic equilibria}
 \label{sec: fixedpoint_instability}
 
As mentioned at the beginning of this work, Hamiltonians in action-angle coordinates and Hamiltonians near an elliptic fixed point are not equivalent settings. Nevertheless, we can relate them by means of the symplectic change of coordinates 
\begin{equation}
\label{eq: symplectic_polar}
\begin{array}{cccc}
 T: & \T^2 \times \R^2_+ & \rightarrow & \R^2_\ast \times \R^2_\ast \\
 & (\theta,R) & \mapsto & \left(\sqrt{2R_1}e^{i\theta_1},\sqrt{2R_2}e^{i\theta_2}\right)
 \end{array},
 \end{equation} 
where \[ \R^2_+ = \lbrace R \in \R^2 \mid R_1,R_2 > 0\rbrace, \hspace{2cm} \R^2_* = \R^2\, \setminus \,\{ 0 \}.\]
Recall that since $T$ is symplectic, that is 
$T^*\left(\sum_{i = 1}^2d\theta_i\wedge dR_i\right) = \sum_{i = 1}^2dx_i \wedge dy_i,$
for any Hamiltonian $H$ defined on an open subset of $\T^2 \times \R^2_+$ we have 
\[ \Phi^t_{H \circ T^{-1}}(p) = T \circ \Phi^t_{H} \circ T^{-1}(p),\]
for any $p \in \R^2_\ast \times \R^2_\ast $ and any $t \in \R$ such that the Hamiltonian flows are well defined. In the following we denote by $I$ the function $I = (I_1,I_2): \R^4 \rightarrow \R^2$ given by 
 \[ I_i (x, y) = \dfrac{x_i^2+y_i^2}{2}.\]
By adapting the examples constructed in Theorem \ref{thm: actionangle}, and with the help of the symplectic change of coordinates $T$, we will prove the following.

\begin{thm}
\label{thm: fixedpoint}
Given $\sigma > 0$ and $\omega \in \R^2 \,\setminus\,\lbrace 0 \rbrace$ such that $\omega_1 \omega_2 < 0$ there exist $\overline{h} \in \mathscr{C}^\omega(0)$ and $\overline{f} \in \mathcal{G}^{1+\sigma}(0)$ flat at $0$ such that for any $\epsilon > 0$ the origin is a Lyapunov unstable Kolmogorov non-degenerate elliptic fixed point with frequency vector $\omega$ for the Hamiltonian
 \[\overline{H}_\epsilon(x, y) = \overline{h}(I(x, y)) + \epsilon \overline{f}(x, y).\]
\end{thm}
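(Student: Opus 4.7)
My plan is to transport the action-angle construction of Theorem~\ref{thm: actionangle} to the elliptic fixed-point setting via the symplectic polar change of variables $T$ of (\ref{eq: symplectic_polar}). The sign hypothesis $\omega_1\omega_2<0$ is used decisively: it forces $\omega^\perp=(-\omega_2,\omega_1)$ to have both coordinates of the same sign, so (after possibly reversing orientation) I may assume that $v^\perp(y_n)$ points into the open positive quadrant $\R^2_+$ for all small $y_n$. This is what allows the drifting orbits of Proposition~\ref{prop: sequences} to be placed inside $\R^2_+$, where $T^{-1}$ is a real-analytic diffeomorphism.

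Concretely, I would pick an analytic $v:(-1,1)\to\R^2$ with $v(0)=\omega$ satisfying (\ref{condv}) together with (\ref{Kol1}) or (\ref{Kol2})---for example $v(t)=(\omega_1,\omega_2+t)$, which satisfies (\ref{Kol2})---and apply Lemmas~\ref{lem: intham}--\ref{lemmaseq} to obtain $h$, $(y_n)$, $(k_n)$. Then I would modify Lemma~\ref{boundsa} to recenter each bump $a_n$ away from the $R_2$-axis: take its support in a ball of radius $y_n/8$ around $P_n:=(0,y_n)+\tfrac{y_n}{2}\,v^\perp(y_n)/\|v^\perp(y_n)\|_2\in\R^2_+$, while preserving $a_n\equiv 1$ on a short segment of $\Lambda_{y_n}\cap B(P_n,y_n/16)$. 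Since $v^\perp(y_n)$ stays at an angle bounded away from $0$ and $\pi/2$ from both coordinate axes near $t=0$, this guarantees $\operatorname{supp}(a_n)\subset\{R:R_1,R_2\geq c\,y_n\}$ for a universal $c>0$, and the Gevrey estimates of Lemma~\ref{boundsa} survive with the same scaling. Setting $\overline{h}(R):=h(R)$ and
\[
\overline{f}(x,y):=\sum_{n\geq 1}\epsilon_n\,a_n(I(x,y))\,\cos\!\bigl(k_n^{\bot}\cdot\theta(x,y)\bigr),\qquad \theta_i(x,y)=\arg(x_i+iy_i),
\]
extended by $0$ off $T\bigl(\T^2\times\bigcup_{n\geq 1}\operatorname{supp}(a_n)\bigr)$, I obtain a Hamiltonian $\overline{h}\circ I$ that is analytic with an elliptic fixed point at $0$ of frequency $\omega$, and Kolmogorov non-degeneracy follows directly from Lemma~\ref{lem: intham}.

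The Lyapunov instability then follows from that of $\T_0$ via conjugation by $T$: for each $n$, the action-angle orbit through $(\theta^{(n)},P_n)$ with $\sin(\theta^{(n)}\cdot k_n^\perp)=1$ drifts inside $\T^2\times(\Lambda_{y_n}\cap\R^2_+)$ at linear speed as in (\ref{linearspeed}), traversing a fixed Euclidean distance before leaving the domain $U$ of Lemma~\ref{lem: intham}; its image under $T$ is an orbit of $\overline{H}_\epsilon$ starting within a distance $O(\sqrt{y_n})\to 0$ of the origin and eventually reaching a fixed positive distance from it.

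The main technical obstacle is verifying that $\overline{f}\in\mathcal{G}^{1+\sigma}(0)$, i.e.\ that it extends Gevrey-smoothly and flatly across the coordinate planes $\{x_i=y_i=0\}$ and at the origin. On $\operatorname{supp}(a_n)$ each factor of the $n$-th summand is real-analytic, so I would bound $|\partial^\alpha \overline{f}_n|$ via Leibniz by combining (i)~$|\partial^\beta(a_n\circ I)|\lesssim(\rho_a/y_n)^{|\beta|}(\beta!)^{1+\gamma}$ (inherited from Lemma~\ref{boundsa} and analyticity of $I$) with (ii)~the analytic estimate $|\partial^\beta\cos(k_n^\perp\!\cdot\theta)|\lesssim(|k_n|_{\max}/\sqrt{y_n})^{|\beta|}\beta!\lesssim y_n^{-3|\beta|/2}\beta!$, valid because $R_1,R_2\geq c y_n$ on the support. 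Disjointness of supports makes the sum over $n$ trivial pointwise. The crucial step is then invoking Lemma~\ref{estimate} to absorb the polynomial-in-$1/y_n$ loss of order $y_n^{-3|\alpha|/2}$ into an extra $(\alpha!)^\tau$ factor via $\epsilon_n=\exp(-y_n^{-\rho})$; choosing $\gamma,\tau$ small (and $\rho$ correspondingly large) so that $1+\gamma+\tau\leq 1+\sigma$ delivers the required Gevrey class. Flatness at the origin and smoothness across the coordinate planes both reduce to the observation that, by disjointness, for $(x,y)$ close to any such point the unique contributing index $n$ must satisfy $y_n\to 0$, and the above bound then forces all partial derivatives to vanish.
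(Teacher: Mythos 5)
There is a genuine gap in the instability argument. You shrink each $a_n$ so that $\operatorname{supp}(a_n)\subset B(P_n,y_n/8)$, which indeed pushes the support into $\R^2_+$ — but it also destroys the drift. Recall from the diffusion argument of Section~\ref{sketch} that along the orbit through $(\theta^{(n)},P_n)$ one has $\dot{z}_R^{(n)}(t)=\epsilon\,\epsilon_n\,a_n\!\bigl(z_R^{(n)}(t)\bigr)\sin\!\bigl(z_\theta^{(n)}(t)\cdot k_n^\perp\bigr)\,k_n^\perp$; the angle factor stays equal to $1$, but once $z_R^{(n)}(t)$ leaves $\operatorname{supp}(a_n)$ the right-hand side vanishes and $R$ is frozen by the integrable part of the flow. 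With a ball of radius $y_n/8$ the total excursion in action is therefore $O(y_n)\to 0$, so the $T$-image of these orbits only reaches distance $O(\sqrt{y_n})\to 0$ from the origin. The assertion that the orbit ``traverses a fixed Euclidean distance before leaving $U$'' is exactly what fails, and it is the heart of the matter. To fix this you must keep a segment of $\Lambda_{y_n}$ of length bounded below (uniformly in $n$) on which the perturbation is unmodified. That is why the paper does \emph{not} shrink $a_n$ (it remains a strip of width $\sim y_n$ around the full line $\Lambda_{y_n}$, with $a_n\equiv 1$ on $\Lambda_{y_n}\cap U$), and instead multiplies by a one-dimensional cutoff $b_n(R_1)$ (Lemma~\ref{boundsb}) that kills the part of the strip with $R_1<y_n$ but equals $1$ for $R_1\geq 2y_n$. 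Combined with $\omega_1\omega_2<0$, this leaves a half-segment of $\Lambda_{y_n}$ inside $\R^2_+$ of fixed positive length on which $a_n b_n\equiv 1$, so the orbit launched from $R^{(n)}=\Lambda_{y_n}\cap\{R_1=2y_n\}$ really does drift to the boundary of $U$ (equation (\ref{ellipticlinearspeed})).

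Your remaining ingredients are in the right spirit. The direct Faà di Bruno bound on $\partial^\beta\cos(k_n^\perp\cdot\theta)$ of size $(|k_n|/\sqrt{y_n})^{|\beta|}\beta!$ on $\{R_1,R_2\geq c y_n\}$ is a legitimate (and arguably cleaner) alternative to the paper's route through the polynomial $p_n$ with $\cos(k_n^\perp\cdot\theta)=p_n\circ g$ and Lemma~\ref{boundsg}: the paper must then absorb the large norm $c_n=\|p_n\|_{C^{|k_n|}}$ by an extra sequence $\delta_n=y_n/(c_h c_n)$ in the definition (\ref{deff1}), whereas your bound has only a polynomially large prefactor in $1/y_n$ and can be absorbed directly by $\epsilon_n=\exp(-y_n^{-\rho})$ via Lemma~\ref{estimate}, after adjusting $\rho$ for the exponent $3|\alpha|/2$. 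Once you repair the drift issue as above, this part of the plan goes through.
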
 

Notice that, contrary to the action-angle case where we were able to construct Lyapunov unstable invariant tori with arbitrary rotation vector, an elliptic fixed point whose rotation vector $\omega$ is non-resonant and verifies $\omega_1 \omega_2 > 0$ must be Lyapunov stable. In fact, as discussed in Section \ref{UE: sc_setting}, we can suppose WLOG that the Hamiltonian associated to such an elliptic point is of the form
\[ H(x, y) = \sum_{i = 1}^2 \omega_i \frac{x_i^2 + y_i^2}{2} + \mathcal{O}^3(x, y).\]
If $\omega_1 \omega_2 > 0$ the energy surfaces $H^{-1}(e)$ must be compact for $e$ sufficiently small. Since the orbits of the Hamiltonian flow are always contained in the energy surfaces of the Hamiltonian the elliptic fixed point must be Lyapunov stable. 

Let us sketch the proof of Theorem \ref{thm: fixedpoint}. Suppose $f,$ $h$ as in Theorem \ref{thm: actionangle}. Define
 \[ \overline{f} = f \circ T^{-1}, \hskip1cm \overline{h} = h \circ T^{-1},\]
with $T$ as in (\ref{eq: symplectic_polar}). Notice that $\overline{f}, \overline{h}$ are only defined on $V \cap (\R^2_* \times \R^2_*)$, for some open neighbourhood $V$ of the origin. Since $h$ depends only on $R$ it is easy to show that $\overline{h}$ extends to an open neighbourhood of $0$. Moreover, the origin is an elliptic fixed point with frequency vector $\omega$ for the integrable Hamiltonian $\overline{h}$. We would like to extend $\overline{f}$ to an open neighbourhood of $0$ so that the origin is an elliptic fixed point of $\overline{H}_\epsilon$ and to use the unstable orbits associated to the Hamiltonian flow of $H_\epsilon = h + \epsilon f$ to prove Theorem \ref{thm: fixedpoint}. This approach poses two main difficulties: 
\begin{itemize}
\item The unstable orbits of $H_\epsilon$ might not be contained in $T^{-1}(\R^2_* \times \R^2_*).$
\item A smooth extension of $\overline{f}$ to $V$ may not exist. 
\end{itemize}
Recall that the unstable orbits $z^{(n)}(t) = \big(z_\theta^{(n)}(t), z_R^{(n)}(t)\big)$ associated to $H_\epsilon$, which were described in detail Section \ref{sketch}, are contained in sets of the form $\T^2 \times \Lambda_{y_n}$, where $\Lambda_{y_n} = (0, y_n) + \langle k_n \rangle$ for some $y_n \in \R,$ $k_n \in \Z^2$. Moreover, they obey (\ref{eq: proof_diffusion}), namely
\[ z_R^{(n)}(t) = (0, y_n) + t\epsilon\epsilon_n k_n^{\bot},\]
for all $t \in \R$ such that the flow is defined. Therefore the first difficulty can be solved if we show that $\Lambda_{y_n} \cap \R^2_+$ contains an infinite segment of $\Lambda_{y_n}$. By looking at the definition of the vectors $k_n$ (Lemma \ref{lemmaseq}), this will be true if and only if the frequency vector $\omega$ satisfies $\omega _1 \omega_2 < 0.$ 

To overcome the second difficulty it suffices to note that an extension of $\overline{f}$ to an open neighbourhood of the origin exists if $f$ is flat on the boundary of $\R^2_+$ that we denote by $\partial\R^2_+$. Notice that in this case the origin will be an elliptic equilibrium with rotation vector $\omega$ for $\overline{H}_\epsilon$. Let us recall the definition of $f$ which is given by (\ref{defF}), namely
\[ f(\theta, R) = \sum_{n\geq 1} \epsilon _n a_n(R)\cos(\theta \cdot k_n^{\bot}),\]
where $\epsilon_n \in \R,$ $k_n \in \Z^2$ and $a_n \in \mathcal{C}^\infty(0).$ To guarantee that $f$ is flat on $\partial\R^2_+$ we will modify the sequence of functions $a_n$, constructed in Lemma \ref{boundsa}, so that its support will be contained in $\R^2_+$. We do this by considering a new family of bump functions $b_n$ (Lemma \ref{boundsb}) and taking the product $a_nb_n$. Following these ideas, instead of taking $f$ as in (\ref{defF}), we will consider a function $F$ of the form
 \begin{equation} 
\label{deff1}
F(\theta, R) = \sum_{n\geq 1} \delta_n \epsilon _n a_n(R)b_n(R_1)\cos(\theta \cdot k_n^{\bot})
\end{equation}
for some constants $\delta_n$ converging to $0$. Finally, we will show that for appropriate sequences $b_n$, $\delta_n$ the function $\overline{f} = F \circ T^{-1}$ admits a Gevrey-smooth extension to an open neighbourhood of $0$ and that for any $\epsilon > 0$ the origin will be a Lyapunov unstable elliptic point for the Hamiltonian $\overline{H}_\epsilon $

\subsection{Proof of Theorem \ref{thm: fixedpoint}}

We start by defining the bump functions $b_n$.

\begin{lem}
\label{boundsb}
Let $\gamma > 0$. Given a sequence of positive numbers $(y_n)_{n\in \N}$ there exists a sequence of functions $\lbrace b_n :\R \rightarrow \R \rbrace_{n\geq 1}$ in $G^{1+\gamma}(\R)$ such that for every $n \geq 1$

\begin{enumerate}
\item $b_n(t) = 0$ for $t \leq y_n$.
\item $b_n(t) = 1$ for $t \geq 2y_n$.
\item There exist constants $c_b,\rho_b > 0$ such that \[ \Vert \partial^\alpha b_n \Vert _\R \leq c_b\left(\dfrac{\rho_b}{y_n}\right)^{|\alpha|}(\alpha !)^{1+\gamma} \]
for every $\alpha \in \N$. 
\end{enumerate}
\end{lem}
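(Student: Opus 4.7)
The plan is to reduce the whole family $(b_n)_{n\geq 1}$ to a single model Gevrey bump $b \in G^{1+\gamma}(\R)$ that equals $0$ on $(-\infty,1]$ and $1$ on $[2,\infty)$, and then define $b_n(t) = b(t/y_n)$. Properties (1) and (2) follow immediately from this rescaling, and the derivative bound (3) falls out of the chain rule
\[
\partial^\alpha b_n(t) = y_n^{-\alpha}\, b^{(\alpha)}(t/y_n),
\]
so any Gevrey estimate $\|b^{(\alpha)}\|_\R \leq c\rho^\alpha (\alpha!)^{1+\gamma}$ for the model $b$ upgrades directly to the claimed uniform bound with $c_b = c$ and $\rho_b = \rho$.

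To construct the model $b$, I would reuse the Gevrey bump
\[
a(s - 1/2) = \begin{cases} 0 & s \notin (0,1), \\ \exp\bigl(-1/((1-s)s)^{\gamma/2}\bigr) & s \in (0,1), \end{cases}
\]
already introduced in Lemma \ref{boundsa}. This function is compactly supported in $(0,1)$ and lies in $G^{1+\gamma/2}(\R) \subset G^{1+\gamma}(\R)$. Shifting its support into $(1,2)$ and setting
\[
b(t) = \frac{1}{N} \int_{-\infty}^{t} a(s - 3/2)\, ds, \qquad N = \int_\R a(s-3/2)\, ds,
\]
yields a smooth primitive that vanishes on $(-\infty,1]$ and is identically $1$ on $[2,\infty)$. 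Since integration does not worsen the Gevrey class---the $\alpha$-th derivative of $b$ is $N^{-1}$ times the $(\alpha-1)$-th derivative of the shifted $a$ for $\alpha \geq 1$---the function $b$ inherits the Gevrey class $1+\gamma/2$ of $a$, with explicit constants $c, \rho$ coming from the bound on $a$.

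I don't foresee any real obstacle: this is the standard Gevrey-bump rescaling scheme, entirely parallel to the construction already carried out in Lemma \ref{boundsa}. The only non-elementary input is the existence of a compactly supported Gevrey function of the prescribed class, which is classical (see \cite{ramis_devissage_1978}, already cited in the paper). The one point worth watching is uniformity of $c_b, \rho_b$ in the index $n$; this is automatic, since the rescaling $t \mapsto t/y_n$ produces only the clean factor $y_n^{-\alpha}$ via the chain rule and no further $n$-dependent quantities enter the estimate.
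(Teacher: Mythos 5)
Your proof is correct and delivers the same conclusion, but the construction of the model step function differs from the paper's. The paper builds $b$ as a quotient, $b(t) = f(t)/(f(t)+f(1-t))$ with $f(t)=\exp(-t^{-\gamma})$ for $t>0$, and appeals to closure of Gevrey classes under addition, multiplication and reciprocals to conclude $b\in G^{1+\gamma}$. You instead integrate the compactly supported bump $a$ already introduced in Lemma~\ref{boundsa}, observing that the $\alpha$-th derivative of the primitive is (up to normalization) the $(\alpha-1)$-th derivative of $a$, so no new closure property is needed; moreover reusing $a$ avoids introducing a second cut-off function. Both routes are standard, and both end with the same rescaling $b_n(t)=b((t-y_n)/y_n)$ (or your equivalent $b(t/y_n)$) and the same chain-rule observation that the rescaling contributes only the clean factor $y_n^{-\alpha}$, which is what makes $c_b,\rho_b$ uniform in $n$. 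Your version is slightly more self-contained, since the paper's argument relies on the closure-under-reciprocals fact, which it merely asserts; the paper's version is slightly shorter to state. Either is acceptable. One small presentational point: you write $b\in G^{1+\gamma/2}\subset G^{1+\gamma}$, which is correct, but to land exactly on the constant in item (3) one should then relax $((\alpha-1)!)^{1+\gamma/2}\le(\alpha!)^{1+\gamma}$ explicitly; this is trivial but worth a line.
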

\begin{proof} Let $f,$ $b: \R \rightarrow \R$ 
\[ f(t) = \left\lbrace \begin{array}{ccc} 0 & \text{ if } & t < 0 \\
\exp\left( -\dfrac{1}{t^\gamma} \right) &\text{ if } & t \geq 0
\end{array} \right. ,\]
\[ b(t) = \dfrac{f(t)}{f(t)+f(1-t)}.\]
We have $f \in G^{1+\gamma}$. A proof of this can be found in the appendix of \cite{marco_stability_2002}. As Gevrey classes are closed under addition, products and reciprocals (as long as the function does not get arbitrarily close from 0) it is clear that $b \in G^{1+\gamma}$. Thus there exist constants $c_b, \rho_b > 0 $ such that 
\[ \Vert \partial^\alpha b \Vert _\R \leq c_b \rho_b^{|\alpha|}(\alpha !)^{1+\gamma} \]
for every $\alpha \in \N$. Define $b_n: \R \rightarrow \R$
\[ b_n(t) = b\left( \dfrac{x-y_n}{y_n} \right).\]
\end{proof}

To analyse $\overline{f}$ it will be useful to study the composition of the functions $\cos(\theta_i)$ and $\sin(\theta_i)$ with $T^{-1}$. Define $g : \R^2_* \times \R^2_* \rightarrow \R^4$ as
\[g = (\cos(\theta_1), \sin(\theta_1), \cos(\theta_2), \sin(\theta_2)) \circ T^{-1}.\] 
It is easy to show that
 \[ g(x, y) = \left( \dfrac{x_1}{(x_1^2+y_1^2)^{1/2}},\dfrac{y_1}{(x_1^2+y_1^2)^{1/2}} ,\dfrac{x_2}{(x_2^2+y_2^2)^{1/2}} ,\dfrac{y_2}{(x_2^2+y_2^2)^{1/2}} \right). \] 
In the following we denote 
\begin{equation}
\label{eq: S_t}
S_t = \left\{ (x_1,y_1,x_2,y_2) \in \R^2_* \times \R^2_* \mid \max(x_1^2+y_1^2, x_2^2+y_2^2) \geq t^2 \right\}.
\end{equation}

 \begin{lem}
 \label{boundsg}
 There exist constants $c_g, \rho_g > 0$ such that 
 \[\sup_{z \in S_t} |\partial ^ \alpha g (z)| \leq c_g\left(\dfrac{\rho_g}{t}\right)^{|\alpha|}\alpha!\]
 for every $t > 0$ and every $\alpha \in \mathbb {N}^4$.
 \end{lem}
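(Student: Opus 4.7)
The key observation is that $g$ decouples into two pairs: $g_1, g_2$ depend only on $(x_1, y_1)$ and $g_3, g_4$ only on $(x_2, y_2)$, each pair being of the form $\bigl(x/\sqrt{x^2+y^2},\, y/\sqrt{x^2+y^2}\bigr)$ on $\R^2 \setminus \{0\}$. My plan is therefore to reduce the statement to a Cauchy-estimate argument, on a complex bidisk of polyradius proportional to the ambient radius, applied to the scalar model $G(z,w) := z/\sqrt{z^2+w^2}$ (with the principal branch of the square root). Since the only non-vanishing partials of $g_1, g_2$ (resp.\ $g_3, g_4$) are those in $(x_1, y_1)$ (resp.\ $(x_2, y_2)$), summing the four componentwise estimates will then deliver the stated vector-valued bound.

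To carry out the scalar step, I would fix a real point $(x,y)$ with $r := \sqrt{x^2+y^2} \geq t$ and consider the complex bidisk
\[ D = \bigl\{(z,w) \in \mathbb{C}^2 : |z-x| < \rho t,\ |w-y| < \rho t\bigr\} \]
for a small absolute constant $\rho \in (0, 1/10)$ to be chosen. Expanding
\[ z^2 + w^2 - r^2 = 2x(z-x) + 2y(w-y) + (z-x)^2 + (w-y)^2 \]
and using $|x|, |y| \leq r$ together with $t \leq r$, one obtains $|z^2 + w^2 - r^2| \leq (4\rho + 2\rho^2)\, r^2$. Choosing $\rho$ small enough that $4\rho + 2\rho^2 \leq 1/2$ forces $z^2 + w^2$ to lie in the open disk of radius $r^2/2$ around the positive real number $r^2$, which keeps it in the right half-plane (hence off the branch cut) and yields $|z^2 + w^2| \geq r^2/2 \geq t^2/2$. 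Consequently $G$ is holomorphic on $D$ and bounded there by $|G| \leq (|x| + \rho t)/\sqrt{r^2/2} \leq 2$.

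Cauchy's integral formula on $D$ then immediately gives, for every $(\alpha_1, \alpha_2) \in \N^2$,
\[ |\partial_x^{\alpha_1} \partial_y^{\alpha_2} G(x, y)| \leq \frac{2\, \alpha_1!\, \alpha_2!}{(\rho t)^{\alpha_1 + \alpha_2}}, \]
and the identical bound holds for $w/\sqrt{z^2+w^2}$ by symmetry. Applying this to each of the four components of $g$ at a point of $S_t$ and combining the estimates produces the stated inequality, with explicit constants such as $c_g = 8$ and $\rho_g = 1/\rho$.

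The only real obstacle is the quantitative verification that the bidisk $D$ remains in the domain of holomorphy of $G$ with a uniform $L^\infty$ bound, i.e.\ that $z^2 + w^2$ stays safely away from the negative real axis under perturbations of size $\rho t$. Once that geometric input is secured, the analytic (Gevrey-$1$) estimate reduces to a routine invocation of Cauchy's formula.
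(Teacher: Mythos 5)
Your proof is correct, but it takes a genuinely different route from the paper's. The paper exploits the fact that each component $g_i$ is positively homogeneous of degree $0$, so that $\partial^\alpha g_i(\lambda x, \lambda y) = \lambda^{-|\alpha|}\partial^\alpha g_i(x,y)$; it then observes that $g_i$ is analytic in a neighbourhood of the unit circle, invokes compactness to get Gevrey-$1$ bounds there, and propagates them by scaling to all of $S_t$. Your argument instead proves the estimate directly via Cauchy's integral formula on a complex bidisk of polyradius comparable to $t$, after verifying that $z^2+w^2$ stays in the right half-plane on that bidisk. The paper's route is softer and shorter (it never needs to exhibit a domain of holomorphy or a uniform $L^\infty$ bound explicitly), while yours is self-contained and yields explicit constants $c_g, \rho_g$. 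Both hinge on the same scaling heuristic: the paper makes it the main step, and you encode it in the choice of polyradius $\rho t$.

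One small remark that applies equally to both arguments: as written, the definition \eqref{eq: S_t} uses $\max(x_1^2+y_1^2, x_2^2+y_2^2)\geq t^2$, under which the stated bound cannot hold (e.g.\ at $(\varepsilon,0,t,0)$ the second-order derivatives of $g_1$ blow up as $\varepsilon\to 0$). Both your proof and the paper's implicitly use the $\min$ version, i.e.\ both pairs of coordinates have radius at least $t$; the $\max$ in \eqref{eq: S_t} must be read as a typo. Your step ``Applying this to each of the four components of $g$ at a point of $S_t$'' silently requires this, so it would be worth stating the $\min$-condition explicitly when writing it out.
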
 
 \begin{proof}
 Denote $g = (g_1, g_2)$. We will prove the bounds for the function $g_1$ as those for $g_2$ are analogous. Since $g_1$ only depends on two variables we can consider it as being defined on $\R^2$. Notice that for every $\lambda > 0$, and every $(x,y) \in \R^2$
 \[ g_1(\lambda x \lambda y ) = g_1(x,y).\] 
Thus 
 \[ \partial^\alpha g_1 (\lambda x, \lambda y ) = \dfrac{1}{\lambda^{|\alpha|}}\partial^\alpha g_1 (x,y)\] 
 for every $\alpha \in \N^2$. Since $g_1$ is analytic on a small open neighbourhood of $\mathbb{S}^1 \subset \R^2$ there exist constants $c_{g_1}, \rho_{g_1}$ such that 
 \[ \sup_{(x,y) \in \mathbb{S}^1} |\partial ^ \alpha g_1 (x,y)| \leq c_{g_1}\rho_{g_1}^{|\alpha|}\alpha!. \] 
Given $t > 0$, $\alpha \in \N^2$
 \begin{align*}
 \sup_{(x,y) \in S_t} |\partial ^ \alpha g_1 (x,y)| & = 
 \sup_{t \leq \lambda } \sup_{(x,y)\in \mathbb{S}^1} |\partial ^ \alpha g_1 (\lambda x,\lambda y)| \\ 
 & \leq \dfrac{1}{t^{|\alpha|}} \sup_{(x,y) \in \mathbb{S}^1} |\partial ^ \alpha g_1 (x,y)| \\
 & \leq c_{g_1}\left(\dfrac{\rho_{g_1}}{t}\right)^{|\alpha|}\alpha!.
 \end{align*}
The bounds for the other functions are completely analogous. Taking maximum over the constants the result follows.
\end{proof}

\begin{proof}[Proof of Theorem \ref{thm: fixedpoint}.] Suppose $\omega_2 \neq 0$. As in the proof of Theorem \ref{thm: actionangle} take any analytic path $v: (-1, 1) \rightarrow \R^2$ with $v(0) = \omega$ satisfying (\ref{condv}) and obeying (\ref{Kol1}) or (\ref{Kol2}). Let $h$, $f$, $(y_n)_{n \in \N}$, $(k_n)_{n \in \N},$ $(a_n)_{n \in \N}$ as in Proposition \ref{prop: sequences} when applied to $v$ and $(b_n)_{n \in \N}$ as in Lemma \ref{boundsb} when applied to the sequence $(y_n)_{n \in \N}$ with $\gamma = \frac{\sigma}{2}.$ Define $F \in \mathscr{C}^\infty(\T_0)$ as in (\ref{deff1}), namely
\[ F(\theta, R) = \sum_{n\geq 1} \delta_n \epsilon _n a_n(R)b_n(R_1)\cos(\theta \cdot k_n^{\bot}),\]
 for some positive constants $(\delta_n)_{n \in \N}$ that we will specify later. We will show that 
\[ \overline{h} = h \circ T^{-1}, \hskip1cm \overline{f} = F \circ T^{-1},\]
 where $T$ is the symplectic change of coordinates given by (\ref{eq: symplectic_polar}), can be extended to an open neighbourhood of the origin and that they verify the conclusions of Theorem \ref{thm: fixedpoint}. Since by construction $h$ depends only on $R$ and 
\[ \pi_R \left( T^{-1}(x, y)\right) = I(x, y),\]
where $\pi_R : \T^2 \times \R^2 \rightarrow \R^2$ denotes the canonical projection, it is clear that $\overline{h}$ admits an extension to an open neighbourhood of $0$. As an abuse of notation we will denote this extension also by $\overline{h}.$ Let 
\[ f_n(\theta,R) = a_n(R)b_n(R_1)\cos(\theta \cdot k_n^\bot) \]
and define $\overline{f_n} = f_n \circ T^{-1}.$ Recall that by definition of $a_n$ and $b_n$ 
\[\textup{Supp}(a_n) \subset \lbrace R \in \R^2 \mid d(R,\Lambda_{y_n}) \leq y_n/4 \rbrace, \hskip1cm \textup{Supp}(b_n) \subset \lbrace R \in \R^2 \mid R_1 \geq y_n \rbrace,\]
where $\Lambda_t = (0, t) + \langle v^\perp(t) \rangle.$ Since $v_1(0) v_2(0) = \omega_1 \omega_2 < 0,$ we have $v_1(t) v_2(t) < 0$ for $t$ sufficiently small which implies 
\[ \Lambda_{t} \cap ((t ,\infty)\times \R) \subset (t,\infty) \times (t,\infty).\]
Thus for $n$ sufficiently large
\begin{align*}
\textup{Supp}(\overline{f_n}) 
& \subset S_{3y_n/4},
\end{align*} 
with $S_t$ as defined in (\ref{eq: S_t}). We may assume WLOG that the previous relation holds all $n \in \N$. In fact, if this were not the case, we can set $a_n$ equal to zero for all $n \leq N$ for some $N$ sufficiently large. Hence $\overline{f_n} = f_n \circ T^{-1}$ can be extended by zero to an open neighbourhood $V$ of the origin in a smooth way. Notice that this extension will be flat on $V \cap \partial(\R^2_*\times \R^2_*)$. Thus the function $\overline{f}$, which is equal to the sum over $n$ of the functions $\overline{f_n}$, can be extended by zero to a continuous function on $V$ . As an abuse of notation we will denote this extensions also by $\overline{f_n}$ and $\overline{f}$. 

Let us show that $\overline{f}$ is of class $C^\infty$ on $V.$ By definition $\overline{f}$ is of class $C^\infty$ on $V \cap \R^2_*\times \R^2_*$ and since $\overline{f}$ it is equal to 0 outside this set it suffices to show that $\overline{f}$ is infinitely differentiable on $\partial(\R^2_*\times \R^2_*)$. Notice that $\cos(\theta \cdot k_n)$ can be expressed as \[ p_n(\cos(\theta_1), \sin (\theta_1),\cos(\theta_2), \sin(\theta_2)),\]
for some polynomial $p_n$ of degree at most $|k_n|$. Thus 
\[ \overline{f_n} = (b_n \circ I_1)(a_n \circ I)(p_n \circ g). \]
Let $c_n = \Vert p_n \Vert_{C^{|k_n|}(U)} $, $d = \Vert I \Vert_{C^{2}(V)} $. Since $\textup{Supp}\left(\overline{f_n}\right) \subset S_{3y_n/4} $ the bounds for $a_n$, $b_n$, $g$ in Propositions \ref{Gevreyproduct} and \ref{Gevreycomposition} yield to
\[\left\Vert \partial ^ \alpha \overline{f_n}\right\Vert_V \leq \dfrac{c_hc_n}{y_n} \left( \dfrac{\rho_h}{y_n}\right)^{|\alpha|} (\alpha!)^{1+\gamma}, \] 
for every multi-index $\alpha \in \N ^4$ where $c_h = d^2c_ac_bc_g\rho_a\rho_b$ and 
\[ \rho _h = 36\max \left\lbrace \dfrac{4}{3}\rho_g(1+c_g),1+d\rho_a, 1+d\rho_b\right\rbrace. \]
Let $\delta_n = \frac{y_n}{c_hc_n}$. For any $\alpha \in \N^4$, $z_n \in \textup{Supp}\left(\overline{f}_n \right)$ 
\begin{align*}
|\partial^\alpha \overline{f}(z_n)| & \leq \epsilon_n c_a\left(\dfrac{\rho_h}{y_n}\right)^{|\alpha|}(\alpha !)^{1+\gamma} \\
& \xrightarrow[n \rightarrow \infty]{} 0 .
\end{align*} 
Thus $\overline{f}$ is infinitely differentiable on $V \cap \partial(\R^2_*\times \R^2_*)$ and it is actually $C^\infty$-flat on it. Hence $\overline{f}$ is of class $C^\infty$ on $V$. By Lemma \ref{estimate}, there exists $C > 0$, independent of $\alpha$ and $n$, such that \[ \dfrac{\epsilon_n}{y_n^{|\alpha|}} = \dfrac{e^{-1/y_n^{\rho}}}{y_n^{|\alpha|}} \leq C(\alpha!)^{\gamma}.\] 
Then 
\begin{align*}
\Vert \partial^\alpha \overline{f}\Vert_V & \leq \sup_{n \geq 1} \epsilon _n \delta_n\Vert (\partial ^{\alpha}\overline{h}_n)\Vert_V \\
& \leq C \rho_h^{|\alpha|} (\alpha !)^{1+2\gamma}\\ 
& = C \rho_h^{|\alpha|} (\alpha !)^{1+\sigma}.
\end{align*} 
Therefore $\overline{f} \in G^{1+\sigma}(U).$ Fix $\epsilon > 0$ and denote \[ H_\epsilon (\theta,R) = h(R) + \epsilon F(\theta,R). \]
 Notice that for $\theta^{(n)}$ such that $\sin(\theta^{(n)} \cdot k_n^{\bot}) = 1$ and every $R = (R_1, R_2) \in \Lambda_{y_n} \cap U$ with $R_1 \geq 2y_n$ we have 
\[ X_{H_\epsilon}(\theta^{(n)}, R) = (\nabla h (R), \epsilon\delta_n\epsilon_n k_n^\bot).\] 
Let $R^{(n)} = \Lambda_{y_n} \cap (\lbrace 2y_n \rbrace \times \R)$, which will belong to $U$ for $n$ sufficiently large. If $R^{(n)}$ is in $U$ and the first coordinate of $k_n^\bot$ is positive (resp. negative), the solution $z^{(n)}(t)$ starting at $(\theta^{(n)},R^{(n)})$ will satisfy 
 \begin{equation} 
 \label{ellipticlinearspeed}
 z_2^{(n)}(t) = R^{(n)}+ t\epsilon\epsilon_n \delta_nk_n^\bot
 \end{equation}
for all positive (resp. negative) $t$ where the flow is defined. Notice that for this values of $t$ the solutions will be contained on $\T^2 \times \R^2_+$. Therefore these unstable orbits will be taken by $T$ to orbits of $\overline{H}_\epsilon = \overline{h} + \epsilon \overline{f}$. This finishes the proof.
\end{proof}

\section{Appendix: Results on Gevrey-smooth functions}
\label{app: gevrey}

Gevrey functions were first introduced by Maurice Gevrey while studying solutions of partial differential equations. In \cite{gevrey_sur_1918} he proves that this class is actually closed under products and compositions. Here we state those results in our particular context. A proof in the more general context of ultra-differentiable functions can be found in \cite{rainer_composition_2014}.

\begin{prop}
\label{Gevreyproduct}
Let $f,$ $g: U \subset \mathbb {R}^d \rightarrow \mathbb{R}$ smooth and $K \subset U$ compact. Suppose 
\[\sup_{z \in K} |\partial ^ \alpha g(z)| \leq c_g\rho_g^{|\alpha|}(\alpha!)^s,\]
\[\sup_{z \in K} |\partial ^ \alpha f(z)| \leq c_f\rho_f^{|\alpha|}(\alpha!)^s,\]
 for every $\alpha \in \mathbb {N}^d$ and some positive constants $c_f,$ $c_g,$ $\rho_f,$ $\rho_g$. Then $h = f g :U \rightarrow \R$ satisfies 
\[ \sup_{z \in K} |\partial ^ \alpha h(z)| \leq c_fc_g(6\max \lbrace \rho_f,\rho_g\rbrace)^{|\alpha|}(\alpha!)^s \]
for every $\alpha \in \N^d$. 
\end{prop}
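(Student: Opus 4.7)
The plan is to apply the multivariate Leibniz rule to $h = fg$ and combine the hypothesized Gevrey bounds term-by-term. Explicitly, I would start from
\[ \partial^\alpha h = \sum_{\beta \leq \alpha} \binom{\alpha}{\beta} \partial^\beta f \cdot \partial^{\alpha - \beta} g, \]
take absolute values, and insert the given estimates to get
\[ \sup_K |\partial^\alpha h| \leq c_f c_g \sum_{\beta \leq \alpha} \binom{\alpha}{\beta} \rho_f^{|\beta|} \rho_g^{|\alpha-\beta|} (\beta!)^s ((\alpha-\beta)!)^s. \]

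The key combinatorial step is to collapse the factorials. Since $s \geq 1$ (which is inherent to the Gevrey setting), I can write $(\beta!(\alpha-\beta)!)^s = \beta!(\alpha-\beta)! \cdot (\beta!(\alpha-\beta)!)^{s-1}$ and use the multi-index inequality $\beta!(\alpha-\beta)! \leq \alpha!$, which follows coordinate-wise from $\binom{\alpha_i}{\beta_i} \geq 1$. This gives
\[ \binom{\alpha}{\beta} (\beta!)^s ((\alpha-\beta)!)^s = \alpha! \cdot (\beta!(\alpha-\beta)!)^{s-1} \leq (\alpha!)^s. \]

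Setting $\rho = \max\{\rho_f, \rho_g\}$, the remaining sum becomes purely geometric:
\[ \sum_{\beta \leq \alpha} \rho_f^{|\beta|} \rho_g^{|\alpha-\beta|} \leq \rho^{|\alpha|} \prod_{i=1}^d (\alpha_i + 1). \]
Finally, the elementary bound $\alpha_i + 1 \leq 2^{\alpha_i}$ (trivial for $\alpha_i \in \{0,1\}$ and clear by induction otherwise) yields $\prod_i (\alpha_i + 1) \leq 2^{|\alpha|}$, so multiplying everything together gives
\[ \sup_K |\partial^\alpha h| \leq c_f c_g (2\rho)^{|\alpha|} (\alpha!)^s \leq c_f c_g (6\rho)^{|\alpha|} (\alpha!)^s, \]
which is the desired inequality (with room to spare in the constant $6$).

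There is no real obstacle here: the proof is a standard Leibniz argument, and the only delicate point is ensuring that the factorial exponents collapse correctly in the multi-index setting, which is handled by the coordinate-wise reduction explained above. The constant $6$ in the statement is loose; the argument actually produces $2$, and the extra slack presumably exists to match the constant appearing in the companion composition estimate (Proposition \ref{Gevreycomposition}) used in the body of the paper.
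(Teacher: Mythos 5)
Your proof is correct: the multivariate Leibniz rule, the factorial collapse $\binom{\alpha}{\beta}(\beta!)^s((\alpha-\beta)!)^s \le (\alpha!)^s$ (valid for $s\ge 1$, the only regime the paper uses), and the cardinality bound $\prod_i(\alpha_i+1)\le 2^{|\alpha|}$ combine to give the stated estimate with the sharper constant $2\max\{\rho_f,\rho_g\}$, which trivially implies the version with $6$. Note that the paper does not actually prove this proposition --- it is stated in the appendix with a pointer to Gevrey's original work and to a reference on ultra-differentiable functions --- so there is no in-paper argument to compare against; your Leibniz-rule derivation is the standard one and would serve as a self-contained replacement.
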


\begin{prop}
\label{Gevreycomposition}
Let $f: V \subset \mathbb {R}^m \rightarrow \mathbb{R}$, $g: U \subset \mathbb {R}^d \rightarrow V \subset \mathbb{R}^m $ smooth and $K_1 \subset U$, $K_2 \subset V $ compact sets with $g (K_1) \subset K_2$. Suppose 
\[ \sup_{z \in K_1} |\partial ^ \alpha g(z)| \leq c_g\rho_g^{|\alpha|}(\alpha!)^s,\]
\[ \sup_{z \in K_2} |\partial ^ \beta f(z)| \leq c_f\rho_f^{|\beta|}(\beta!)^s, \]
 for every $\alpha \in \mathbb {N}^d$, $\beta \in \mathbb{N}^m$ and some positive constants $c_f,c_g,\rho_f,\rho_g$. Then the composite function $h = f \circ g :U \rightarrow \R$ satisfies 
\[ \sup_{z \in K_1} |\partial ^ \alpha h(z)| \leq c_fc_g\rho_f(\rho_g(1+\rho_f c_g))^{|\alpha|}(\alpha!)^s \] 
for every $\alpha \in \N^d$. 
\end{prop}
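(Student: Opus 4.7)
My plan is to prove this via the multivariate Faà di Bruno formula and then bound the resulting combinatorial expression term by term. For a multi-index $\alpha \in \mathbb{N}^d$ one has
\[
\partial^\alpha(f\circ g)(x) = \sum \frac{\alpha!}{\prod_{j,\ell}m_{j,\ell}!\,(\ell!)^{m_{j,\ell}}}\,(\partial^\lambda f)(g(x))\prod_{j=1}^m\prod_{\ell}(\partial^\ell g_j(x))^{m_{j,\ell}},
\]
where the sum runs over all families of non-negative integers $(m_{j,\ell})$ indexed by $j\in\{1,\ldots,m\}$ and $\ell\in\mathbb{N}^d$ with $|\ell|\geq 1$, subject to $\sum_{j,\ell}m_{j,\ell}\,\ell=\alpha$, and with $\lambda_j=\sum_\ell m_{j,\ell}$. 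Setting $k=|\lambda|=\sum_{j,\ell}m_{j,\ell}$ and substituting the hypotheses on $f$ and $g$, each term is bounded in absolute value by
\[
c_f\,c_g^{\,k}\,\rho_f^{\,k}\,\rho_g^{|\alpha|}\,(\lambda!)^{s}\,\alpha!\prod_{j,\ell}\frac{(\ell!)^{(s-1)m_{j,\ell}}}{m_{j,\ell}!},
\]
since each factor of $\partial^\ell g_j$ contributes $\rho_g^{|\ell|}$ and these powers sum to $|\alpha|$.

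The second step is to sum this bound over all admissible families. Grouping by $k$, the sum reduces to counting partitions of the multi-index $\alpha$ into $k$ ordered blocks with coordinate labels, weighted by $(c_g\rho_f)^{k}$ and by the inherited factorials. Using the elementary estimate $(\ell!)^{s-1}\leq(|\ell|!)^{s-1}$ and the multinomial identity $\alpha!\prod_{j,\ell}\bigl(m_{j,\ell}!(\ell!)^{m_{j,\ell}}\bigr)^{-1}\leq m^{k}\,\alpha!\,\bigl(\prod_B B!\bigr)^{-1}$ over set partitions, one bounds the inner sum by a constant times $S(|\alpha|,k)$, where $S$ is the Stirling number of the second kind. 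The final bound on $\|\partial^\alpha h\|_{K_1}$ then takes the form
\[
c_f\,\rho_g^{|\alpha|}(\alpha!)^{s}\sum_{k\geq 1}(c_g\rho_f)^{k}\,\binom{|\alpha|}{k}
\]
up to constants absorbed into $\rho_g$, and the binomial theorem yields the base $(1+c_g\rho_f)^{|\alpha|}$. Extracting the $k=1$ summand separately accounts for the isolated prefactor $\rho_f$ in the statement; the remaining factor $c_g$ out front comes from the $k\geq 1$ constraint.

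The main obstacle will be the combinatorial bookkeeping to get the factorial structure exactly right: Faà di Bruno naturally produces $|\alpha|!$ and $\lambda!$ factors, whereas the target bound demands precisely $(\alpha!)^s$, so one must absorb ratios such as $|\alpha|!/\alpha!\leq d^{|\alpha|}$ into the exponential base (possibly enlarging the ambient constant in $\rho_g$) without inflating the claimed prefactor $c_fc_g\rho_f$. A cleaner alternative that sidesteps some of this bookkeeping is the majorant-series method: associate to $f$ the scalar series $\widehat f(t)=c_f\sum_{n\geq 0}\rho_f^{\,n}(n!)^{s-1}t^n$ (which majorizes, in a suitable sense, the Taylor data of $f$ weighted by $(\alpha!)^{-s}$), and similarly $\widehat g(t)=c_g\sum_{n\geq 1}\rho_g^{\,n}(n!)^{s-1}t^n$. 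The composition $\widehat f\circ\widehat g$ then provides a majorant for $h$, and the scalar computation $\widehat f\bigl(\widehat g(t)\bigr)$ directly delivers the product $c_fc_g\rho_f\,(\rho_g(1+\rho_fc_g))^{|\alpha|}(\alpha!)^s$ upon reading off the coefficient of $t^{|\alpha|}$ with the correct $(\alpha!)^s$ normalization.
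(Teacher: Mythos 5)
The paper itself gives no proof of this proposition (it is quoted from the literature, with \cite{gevrey_sur_1918} and \cite{rainer_composition_2014} cited for proofs), so your argument has to stand on its own; the route you choose, Fa\`a di Bruno plus termwise estimates, is indeed the standard one. The formula you quote is the correct Constantine--Savits form, and your termwise bound $c_f(c_g\rho_f)^k\rho_g^{|\alpha|}(\lambda!)^s\,\alpha!\prod_{j,\ell}(\ell!)^{(s-1)m_{j,\ell}}/m_{j,\ell}!$ is right. The gap is the summation over the families $(m_{j,\ell})$, which is the entire content of the proposition and which, as sketched, is inconsistent. If the inner sum at fixed $k$ really were of size $S(|\alpha|,k)$, then $\sum_k (c_g\rho_f)^k S(|\alpha|,k)$ has Bell-number (super-exponential in $|\alpha|$) growth and can never be dominated by $c_g\rho_f(1+c_g\rho_f)^{|\alpha|}$; nor can the passage from $S(|\alpha|,k)$ to $\binom{|\alpha|}{k}$ be effected ``up to constants absorbed into $\rho_g$'', since $S(n,k)/\binom{n}{k}$ is not exponentially bounded in $n$. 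What actually saves the estimate is that one copy of each $\ell!$ furnished by the Gevrey bound on $g$ must be spent against the Fa\`a di Bruno denominator, converting set-partition counts (Stirling numbers) into counts of compositions of $\alpha$ into $k$ nonzero multi-indices (of binomial type, $\binom{|\alpha|-1}{k-1}$ in the univariate case), with only the residual $(\ell!)^{s-1}$ pooled into $(\alpha!)^{s-1}$. Your sketch never makes this allocation, and the ``multinomial identity'' you invoke is not a statement I can parse into a correct inequality; so the final display is asserted rather than derived.

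The second problem is the constants, which here are not cosmetic: the exact prefactor $c_fc_g\rho_f$ and base $\rho_g(1+\rho_fc_g)$ are used verbatim in the proof of Theorem \ref{thm: fixedpoint} (in the explicit formula for $\rho_h$), so absorbing $m^k$ or $|\alpha|!/\alpha!\le d^{|\alpha|}$ into $\rho_g$, as you propose, proves a genuinely weaker statement. The same difficulty undercuts your majorant shortcut: reducing the coordinate-factorial normalization to a scalar majorant computation needs an inequality of the shape $\lambda!\prod_{j,\ell}(\ell!)^{m_{j,\ell}}\le\alpha!$, which already fails for $\alpha=e_1+e_2$ decomposed into $e_1$ and $e_2$ acting on the same component of $g$ (it would give $2\le1$); only the bound by $|\alpha|!$ holds, which is exactly where the $d^{|\alpha|}$ loss enters. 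Worse, under the componentwise reading of the hypothesis on the vector-valued $g$ (the reading your termwise bound uses), the stated constant is not attainable at all: for $s=1$, $d=1$, $m=2$, $K_1=K_2=\{0\}$, $f(u)=c_f\prod_{j=1,2}(1-\rho_fu_j)^{-1}$ and $g_1=g_2=c_g\rho_g x/(1-\rho_g x)$ satisfy the hypotheses with equality at the origin, yet $h''(0)=2c_fc_g\rho_f\rho_g^2(2+3\rho_fc_g)$, which exceeds the claimed $2c_fc_g\rho_f\rho_g^2(1+\rho_fc_g)^2$ when $\rho_fc_g=1$. So a correct proof must interpret $c_g$ as controlling all components of $g$ jointly (say through the $\ell^1$-norm the paper uses elsewhere) and must track precisely how $m$ and $d$ enter the combinatorics; this is exactly the bookkeeping your plan defers, and as it stands the proposal does not prove the proposition.
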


\bibliographystyle{AIMS.bst}
\bibliography{Unstable.bib}

\end{document}